\theoremstyle{plain}
\newtheorem{theorem}{Theorem}
\newtheorem{corollary}[theorem]{Corollary}
\newtheorem{proposition}[theorem]{Proposition}
\theoremstyle{definition}
\theoremstyle{remark}
\date{}
\title{\bf Narayana polynomials and some generalizations}
\author{Ricky X. F. Chen$^a$, Christian M. Reidys$^b$\\
\small Department of Mathematics and Computer Science,\\[-0.8ex]
\small University of Southern Denmark, Campusvej 55,\\[-0.8ex]
\small DK-5230, Odense M, Denmark\\
\small\tt $^a$chen.ricky1982@gmail.com, $^b$duck@santafe.edu
}
\begin{document}

\maketitle

\begin{abstract}
In this note, by counting some colored plane trees we obtain several binomial identities.
These identities can be viewed as specific evaluations of certain generalizations of the
Narayana polynomials.
As consequences, it provides combinatorial proofs for a bijective problem in Stanley's collection
``Bijective Proof Problems'', a new formula for the Narayana polynomials as well as a new expression for
the Harer-Zagier formula enumerating unicellular maps, in a unified way.
Furthermore, we identify a class of plane trees, whose enumeration is closely
connected to the Schr\"{o}der numbers.
Many other binomial identities are presented as well.

  \bigskip\noindent \textbf{Keywords:} Narayana polynomials; colored trees; matches; Schr\"{o}der
  number; Stanley's bijective proof problem; Harer-Zagier formula

  \noindent\small Mathematics Subject Classifications: 05A19; 05C05
\end{abstract}

\section{Introduction}
This note is firstly motivated by the following bijective problem in Stanley's collection
``Bijective Proof Problems'' \cite[(15)]{12}:
\begin{align}\label{1e4}
\sum_{k=0}^n{n\choose k}^2x^k=\sum_{j=0}^n{n\choose j}{2n-j\choose
n}(x-1)^j.
\end{align}
We note that in Wilf \cite[p.117]{13}, there is ``an odd kind of a
combinatorial proof" of \eqref{1e4} based on the Sieve Method in view of
generatingfunctionology and by counting $n$-subsets of $[2n]$ subject to
some $n$ properties.
Secondly, the authors found that the new expression of the Narayana polynomials
obtained in~\cite{2} (and independently in~\cite{10}) is actually
related to \eqref{1e4} and the new expression of the Narayana polynomials reads
\begin{align}\label{1e5}
\sum_{k=1}^n N_{n,k}y^k=\sum_{k=0}^n \frac{1}{n+1}{n+1\choose k}{2n-k\choose n}(y-1)^k,
\end{align}
where the Narayana numbers $N_{n,k}=\frac{1}{n}{n\choose k}{n\choose k-1}$.
\par
An outline of this note is as follows. By counting some kind of colored plane trees, we
obtain an elementary identity in Section~$2$.
This identity implies the
identities \eqref{1e4} and \eqref{1e5} as special cases.
Furthermore, we present a new
expression for the Harer-Zagier formula, i.e., the generating polynomial for unicellular
maps \cite{17,15}.
Extracting the corresponding coefficients, we show combinatorially that the latter are
equal to the Lehman-Walsh formula \cite{17,16}, which counts unicellular maps for fixed number of
edges and topological genus. We also present a (possibly new) class of plane trees the
enumeration of which involves the Schr\"{o}der numbers.
In Section~$3$, we enumerate some variations of the colored plane trees so that more
binomial identities are obtained.

\section{Narayana polynomials and certain generalization}

In this section, by studying certain kind of
plane trees, we will combinatorially prove an identity which implies \eqref{1e4} and \eqref{1e5}
as special cases.
A colored-labeled plane tree with $n+1$ vertices is a plane tree with vertices
uniquely labeled by $[n+1]=\{1,2,\ldots,n+1\}$ and where its leaves are either not
colored, or colored $N$ or $Y$.
In the following, we denote the sets of the internal vertices, $Y$-leaves and $N$-leaves
in a colored-labeled plane tree $T$ by $int(T)$, $lev_Y(T)$ and $lev_N(T)$, respectively.
As usual, the cardinality of a set $S$ is denoted by $|S|$.
\par
With foresight, we next recall a bijection between labeled plane trees and sets of matches,
called \emph{Chen's bijective algorithm} \cite{1}.

\begin{theorem}[Chen \cite{1}]\label{3t1}
There is a bijection between labeled plane trees with labels in $[n+1]$ and sets of n
matches with labels in $\{1,\ldots, n+1,(n+2)^*,\ldots,(2n)^*\}$,
where a match is a labeled plane tree with two vertices. In addition,
vertices with labels in $[n+1]$ which appear as roots in a set of n matches
appear as internal vertices in its corresponding labeled plane tree,
while vertices with labels in $[n+1]$ which are leaves in a set of $n$ matches
appear as leaves in the corresponding labeled plane tree.
\end{theorem}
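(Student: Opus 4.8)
The plan is to prove Theorem~\ref{3t1} by constructing an explicit recursive algorithm — Chen's algorithm — that converts a labeled plane tree on $[n+1]$ into a set of $n$ matches, and then verifying that the construction is reversible. The key observation is a bookkeeping one: a plane tree with $n+1$ vertices has $n$ edges, so we want $n$ matches, but each match has $2$ vertices, giving $2n$ vertex-slots; since only $n+1$ original labels are available, we need $n-1$ additional "starred" labels $(n+2)^*,\ldots,(2n)^*$ to fill the remaining slots. So the algorithm must, in the course of breaking the tree apart, introduce exactly $n-1$ new starred labels.

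**First I would describe the decomposition step.** Given a labeled plane tree $T$ on $[n+1]$ (rooted, say, at the vertex carrying the smallest label, or with an externally specified root), locate a suitable edge to split off — the natural choice is to process leaves in a canonical order (e.g., repeatedly take the leaf with the largest label, or do a depth-first traversal). When we detach a leaf $\ell$ together with its parent edge, we form one match; but the parent vertex $v$ may still have other children, so $v$ must be "duplicated" — one copy stays in the remaining forest and one copy becomes the root of the new match. That duplicated copy is where a fresh starred label $(n+1+i)^*$ gets assigned. Iterating until the tree is exhausted produces a sequence of $n$ matches; one checks by induction that exactly $n-1$ starred labels are consumed (the very last match uses two original labels, every earlier split introduces one star), and that the plane-tree (cyclic) order at each vertex is recorded faithfully in the order the matches are emitted. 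The crucial structural claim — which I would isolate as the heart of the argument — is that \emph{a label in $[n+1]$ ends up as a root of some match if and only if it was an internal (non-leaf) vertex of $T$}, and as a leaf of a match iff it was a leaf of $T$; this follows because a vertex of $T$ is duplicated (hence appears as a match-root) precisely when it has at least one child, i.e., precisely when it is internal.

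**Next I would establish injectivity and surjectivity by exhibiting the inverse.** Given a set of $n$ matches with labels in $\{1,\ldots,n+1,(n+2)^*,\ldots,(2n)^*\}$, one reassembles the tree by reversing the emission order: repeatedly identify a match whose root carries a starred label (or, at the final stage, handle the unique all-unstarred match) and glue it back by merging that starred root into the vertex it was split from, using the recorded plane order to insert the reattached subtree in the correct position among the siblings. The work here is to argue that at every stage there is a well-defined match to reattach and that the gluing is forced, so that the inverse map is single-valued; a clean way to organize this is to track, as an invariant, that the starred labels always form the set $\{(n+2)^*,\ldots,(n+1+j)^*\}$ after $j$ reattachments, so the "most recently created" star is unambiguous.

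**The main obstacle** I anticipate is purely expository rather than mathematical: pinning down the canonical traversal order and the precise rule for where the starred copy is inserted, so that the "plane" (linear/cyclic ordering of children) structure is genuinely preserved and the map is a bijection on the nose rather than merely up to relabeling. Getting the indexing of the stars to come out exactly as $(n+2)^*,\ldots,(2n)^*$ — and in particular verifying the off-by-one that the last match is the unique one with both labels unstarred — requires care but is routine once the recursion is set up. Since this theorem is quoted from \cite{1}, I would in fact keep the proof brief, give the algorithm and the inverse, and refer to \cite{1} for the full verification of bijectivity, emphasizing only the root/leaf correspondence in the second half of the statement, as that is the feature used in the sequel.
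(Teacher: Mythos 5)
The paper does not actually prove this statement: it is quoted from Chen's 1990 paper and the authors explicitly write that ``for details with respect to \emph{Chen's bijective algorithm}, we refer the reader to \cite{1}.'' So your decision to keep the argument brief and defer full verification to the source matches the level of detail the paper itself adopts, and your bookkeeping ($n$ edges, hence $n$ matches with $2n$ slots, hence $n-1$ starred labels with every label used exactly once) is correct, as is your identification of the root/leaf correspondence as the structural heart of the statement.

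That said, the sketch as written contains a rule that would break the very correspondence you highlight. You detach a leaf $\ell$ from its parent $v$, duplicate $v$, and assign the fresh starred label to ``the duplicated copy [that] becomes the root of the new match.'' Under that convention every match split off from $v$ has a starred root, and $v$ keeps its original label in the shrinking forest until it has no children left --- at which point it is itself a leaf and is detached as the \emph{leaf} of the match coming from the edge joining it to its own parent. Its original label would then appear as a match leaf even though $v$ is internal in $T$, contradicting the second half of the theorem. The repair (and what Chen's algorithm in effect does) is the opposite assignment on exactly one of the detachments at $v$: one match rooted at $v$ carries $v$'s original label as its root, and the copy left behind in the forest is the one that receives a starred label, so that $v$ subsequently travels as a starred vertex. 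Relatedly, you never specify how the plane (sibling) order and the attachment points are encoded in an unordered \emph{set} of matches; that encoding, carried by which starred labels are created at which step, is the actual content of Chen's construction rather than a routine verification. None of this affects the rest of the paper, which uses only the root/leaf correspondence and the resulting counting, but as a standalone proof the proposal has a genuine gap at precisely the step it calls crucial.
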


For details with respect to \emph{Chen's bijective algorithm}, we refer the reader
to \cite{1}.

We also remark that all proofs in the present note can be formulated directly in
the language of matches.

Now, we are ready to present our results.
Let $\Gamma_{n,x,q}$ denote the set of colored-labeled plane trees $T$ on $[n+x+q]$,
where all vertices with labels in $[q]$ are all uncolored leaves, all vertices with labels in
$\{q+1,\cdots, q+x\}$ are internal.

\begin{proposition}\label{3p1}
The number of trees $T\in \Gamma_{n,x,q}$ such that $|int(T)|+|lev_Y(T)|=k+x$ is given by
$$
{n\choose k}{k+n+x+q-2\choose n+q-1}(n+x+q-1)!,
$$
where $n,x,q,k\geq 0$.
\end{proposition}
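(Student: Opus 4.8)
The plan is to reduce everything to one clean lemma and then assemble the answer by summing over the possible internal‑vertex sets while bookkeeping the leaf colors. The lemma is: \emph{for any $I\subseteq[m+1]$ with $1\le|I|\le m$, the number of labeled plane trees on $[m+1]$ whose set of internal vertices is exactly $I$ equals $\binom{m-1}{|I|-1}\,m!$}.

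I would prove the lemma using Chen's bijection (Theorem~\ref{3t1}, applied with $m$ in place of $n$). Under that bijection a plane tree on $[m+1]$ corresponds to a set of $m$ matches on $\{1,\dots,m+1,(m+2)^\ast,\dots,(2m)^\ast\}$ with each of these $2m$ labels used exactly once, and by the second assertion of Theorem~\ref{3t1} a real label belongs to $int(T)$ precisely when it occurs as the \emph{root} (rather than the leaf) of its match. Since every real label occurs exactly once, ``$int(T)=I$'' is equivalent to ``the real labels occurring as roots are exactly those in $I$''. Hence building such a match set amounts to (a) choosing which $m-|I|$ of the $m-1$ starred labels serve as roots, giving $\binom{m-1}{m-|I|}=\binom{m-1}{|I|-1}$ choices, followed by (b) a bijective pairing of the $m$ root‑slots with the $m$ leaf‑slots, giving $m!$ choices. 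This yields $\binom{m-1}{|I|-1}m!$; as a check, $\sum_{I}\binom{m-1}{|I|-1}m!=\binom{2m}{m}m!=(2m)!/m!$, the total number of labeled plane trees on $[m+1]$.

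Next I would set up the count for $\Gamma_{n,x,q}$. Put $N=n+x+q$ and $m=N-1$, write $F_I=\{q+1,\dots,q+x\}$ for the forced internal vertices, $[q]$ for the forced uncolored leaves, and $F$ for the remaining $n$ \emph{free} vertices (recall that in $\Gamma_{n,x,q}$ the uncolored leaves are exactly those in $[q]$, so a free vertex that is a leaf is colored $Y$ or $N$). For $T\in\Gamma_{n,x,q}$ write $int(T)=F_I\sqcup J$ with $J\subseteq F$, $|J|=j$; then $|int(T)|+|lev_Y(T)|=k+x$ forces $|lev_Y(T)|=k-j$ with $lev_Y(T)\subseteq F\setminus J$, and the remaining $n-k$ free vertices are $N$‑leaves, their color being thereby determined. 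So the trees with parameter $k$ are produced by: choosing $J\subseteq F$ ($\binom nj$ ways), choosing the $Y$‑leaves inside $F\setminus J$ ($\binom{n-j}{k-j}$ ways), and choosing a plane‑tree structure on $[N]$ with internal set exactly $F_I\cup J$ ($\binom{m-1}{x+j-1}m!$ ways, by the lemma). Using $\binom nj\binom{n-j}{k-j}=\binom nk\binom kj$ this gives
\[
\sum_{j=0}^{k}\binom nj\binom{n-j}{k-j}\binom{m-1}{x+j-1}m!
=\binom nk\,m!\sum_{j=0}^{k}\binom kj\binom{m-1}{x+j-1},
\]
and a single Vandermonde convolution collapses the inner sum to $\binom{k+m-1}{x+k-1}=\binom{k+n+x+q-2}{n+q-1}$; since $m!=(n+x+q-1)!$, this is the claimed formula.

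The only genuinely delicate point is the lemma: one must extract from Theorem~\ref{3t1} that ``$int(T)=I$'' is a purely local condition on the match set (which real labels are roots), with no interaction between the choice of starred roots and the root–leaf pairing. Once that is secured, the rest is binomial bookkeeping plus one Vandermonde identity. As the introductory remarks suggest, I would also note that the argument can be carried out entirely in the language of $m$ matches without passing back to trees.
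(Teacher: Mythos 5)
Your proof is correct, and it does rest on the same engine as the paper's (Chen's bijection, Theorem~\ref{3t1}, reducing everything to choosing which labels are roots of the matches and then pairing roots with leaves), but the decomposition is genuinely different. The paper avoids your stratification entirely: it encodes the condition $|int(T)|+|lev_Y(T)|=k+x$ by a \emph{single} $k$-subset $A$ of the $n$ free labels, namely the union of the free internal vertices and the $Y$-leaves, and only afterwards chooses which $n+q-1$ of the $k+n+x+q-2$ labels in $A\cup\{\text{starred labels}\}$ serve as the non-prescribed roots --- an element of $A$ becomes internal if it is picked as a root and a $Y$-leaf otherwise, while free labels outside $A$ become $N$-leaves. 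This makes the count a pure product $\binom nk\binom{k+n+x+q-2}{n+q-1}(n+x+q-1)!$ with no summation and no closing identity. You instead fix the exact internal-vertex set via your (correct, and independently useful) lemma that there are $\binom{m-1}{|I|-1}m!$ labeled plane trees on $[m+1]$ with internal set $I$, then sum over how $k$ splits between free internal vertices and $Y$-leaves, and need one Vandermonde convolution to collapse the sum. What your route buys is a clean, reusable refinement of the plane-tree count by internal-vertex set; what the paper's route buys is a direct bijective derivation of the product form, which is the stated point of the combinatorial proof. One small remark: your lemma's hypothesis $1\le|I|$ is harmless in the application since $\binom{m-1}{-1}=0$ correctly kills the $x=j=0$ term, but it is worth saying so explicitly.
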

\begin{proof}
Based on Theorem \ref{3t1}, it is not difficult to see that the set of colored-labeled plane
trees $T\in \Gamma_{n,x,q}$ with $|int(T)|+|lev_Y(T)|=k+x$ are in
bijection with the set of pairs $(A,\chi)$ where $A\subseteq
[n+x+q]\setminus [x+q]$ with $|A|=k$ and $\chi$ is a set of matches
with labels in $\{1,\ldots,n+x+q,(n+x+q+1)^*,\ldots,2(n+x+q-1)^*\}$
where all vertices with labels in $\{q+1,\cdots,q+x\}$ are roots and
other unstarred roots of $\chi$ are in $A$. (Figure $1$ shows an
example of this bijection. Note, leaves with labels in $A$ will
be colored $Y$ and others $N$.) However, there are ${n\choose k} $ ways
to choose $A$, ${k+n+x+q-2\choose n+q-1}$ ways to determine the
remaining $n+q-1$ roots of $\chi$ besides those $x$ prescribed roots
and at last $(n+x+q-1)!$ ways to match up, whence Proposition \ref{3p1}.
\end{proof}
\begin{center}
\setlength{\unitlength}{1mm}
\begin{picture}(120,40)
\put(15,35){\circle*{1.5}}\put(15,35){\line(-4,-5){8}}
\put(15,35){\line(0,-1){10}}\put(15,35){\line(4,-5){8}}
\multiput(7,25)(8,0){3}{\circle*{1.5}}
\put(7,25){\line(-1,-2){5}} \put(7,25){\line(2,-5){4}}
\put(2,15){\circle*{1.5}}\put(11,15){\circle*{1.5}}
\put(15,25){\line(0,-1){10}} \put(15,15){\circle*{1.5}}
\put(15,15){\line(-1,-3){4}}\put(15,15){\line(1,-3){4}}
\put(11,3){\circle*{1.5}}\put(19,3){\circle*{1.5}}
\put(15,25){\line(4,-5){8}}\put(23,15){\circle*{1.5}}
\put(23,15){\line(0,-1){12}}\put(23,3){\circle*{1.5}}
\put(16,35){\text{3}}\put(4,26){\text{10}}\put(0,12){\text{2}}\put(10,11){\text{6}}
\put(7,14){\text{N}}\put(16,26){\text{8}}
\put(16,14){\text{11}}\put(7,2){\text{5}}\put(12,-1){\text{Y}}
\put(17,-1){\text{1}}\put(24,25){\text{Y}}\put(23,21){\text{7}}
\put(25,15){\text{4}}\put(24,-1){\text{9}}\put(25,3){\text{N}} 
\put(30,26){\vector(1,0){8}}\put(38,25){\vector(-1,0){8}}
\multiput(50,17)(7,0){10}{\circle*{1}}\multiput(50,4)(7,0){10}{\circle*{1}}
\multiput(50,17)(7,0){10}{\line(0,-1){13}}
\put(49,19){\text{4}}\put(49,0){\text{9}}\put(55,19){\text{10}}\put(56,0){\text{2}}
\put(62,19){\text{$13^*$}}\put(63,0){\text{6}}\put(70,19){\text{3}}
\put(69,0){\text{$14^*$}}\put(76,19){\text{$11$}}\put(77,0){\text{5}}
\put(83,19){\text{$16^*$}}\put(84,0){\text{$1$}}\put(91,19){\text{$8$}}
\put(90,0){\text{$17^*$}}\put(97,19){\text{$18^*$}}\put(97,0){\text{$12^*$}}
\put(104,19){\text{$15^*$}}\put(105,0){\text{$19^*$}}
\put(113,19){\text{$20^*$}}\put(113,0){\text{7}} 
\put(42,28){\text{$A=\{5,7,8,10,11\}$}}\put(42,20){\text{$\chi:$}}
\put(40,24){\text{$\Big\{$}}
\end{picture}
\end{center}
\begin{center}
Figure 1: A tree in $\Gamma_{11,2,2}$ and its
corresponding pair $(A,\chi)$.
\end{center}

As an application of Proposition~\ref{3p1} we immediately obtain
a combinatorial proof of the following, well-known identity:

\begin{theorem} For all $n\geq 0, r\in\mathbb{C},q\in \mathbb{Z}$, there holds
\begin{align}\label{3e1}
\sum_{k= 0}^n(-1)^{n-k}{n\choose k}{k+r\choose n+q}={r\choose q}.
\end{align}
\end{theorem}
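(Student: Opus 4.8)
The plan is to derive \eqref{3e1} from Proposition~\ref{3p1} by a double-counting argument on the set $\Gamma_{n,x,q}$. The right-hand side of Proposition~\ref{3p1} counts trees $T$ with a fixed value of $|int(T)|+|lev_Y(T)|$; if instead I sum over all admissible values of this statistic, I obtain the total cardinality $|\Gamma_{n,x,q}|$, which I can compute independently in a simpler way. Concretely, to build an arbitrary $T\in\Gamma_{n,x,q}$ one chooses a plane tree structure on $n+x+q$ labeled vertices in which the $q$ vertices of $[q]$ are leaves and the $x$ vertices of $\{q+1,\dots,q+x\}$ are internal, and then independently colors each leaf not in $[q]$ either $Y$ or $N$. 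Via Chen's bijection (Theorem~\ref{3t1}) the underlying labeled-plane-tree count with the prescribed root/leaf constraints is $\binom{2(n+x+q-1)-x-q}{\,\cdot\,}(n+x+q-1)!$ type expression; summing the proposition over $k$ must reproduce it. This yields an identity of the shape $\sum_k \binom{n}{k}\binom{k+n+x+q-2}{n+q-1} = 2^{?}\binom{?}{?}$, which is not yet \eqref{3e1} but is the right engine — so the real step is to introduce the sign via inclusion–exclusion on the colors rather than a plain sum.

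So the key idea I would actually carry out is a signed count: instead of summing the statistic, I fix the total number of $Y$-leaves-plus-internal-vertices and take the alternating sum. The combinatorial meaning of $\sum_{k}(-1)^{n-k}\binom{n}{k}(\text{trees with statistic }k+x)$ is, by the standard sign-reversing involution / inclusion–exclusion over the set $[n+x+q]\setminus[x+q]$ of ``optionally colored'' vertices, the number of trees in which \emph{every} such vertex is forced to be internal (the surviving configurations are those where the chosen subset $A$ is everything). After cancelling the common factor $(n+x+q-1)!$, the left side of \eqref{3e1} — with $r := n+x+q-2$ after the substitution $k \mapsto$ the relevant index, and matching $q$ to the paper's $q$ — should collapse to the number of labeled plane trees on $n+x+q$ vertices with a prescribed set of internal vertices and a prescribed set of leaves, which by Chen's bijection is exactly a single binomial coefficient $\binom{r}{q}$ counting the choice of the remaining roots. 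One must be slightly careful that \eqref{3e1} is stated for $r\in\mathbb C$ and $q\in\mathbb Z$ whereas the combinatorial model only gives nonnegative integer instances; the passage to all complex $r$ is then immediate because both sides are polynomials in $r$ of degree $\le n$ agreeing at infinitely many integer points, and the cases $q<0$ or $q>n$ are handled separately (both sides vanish, essentially by the support of the binomial coefficients).

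The main obstacle I anticipate is bookkeeping the precise correspondence between the indices in Proposition~\ref{3p1} and the indices in \eqref{3e1}: matching $k+x$ versus $k$, lining up $k+n+x+q-2$ with $k+r$ and $n+q-1$ with $n+q$, and making sure the alternating-sum manipulation does not secretly require $x$ to be positive. A clean way to sidestep most of this is to not go through $\Gamma_{n,x,q}$ at the level of trees at all, but to observe that Proposition~\ref{3p1}, read as a polynomial identity in a formal variable (summing $\sum_k \binom{n}{k}\binom{k+n+x+q-2}{n+q-1}(y-1)^k$ and comparing the $y=0$ evaluation), directly produces the alternating sum on the left of \eqref{3e1}; then the substitutions $r = n+x+q-2$ and using that $x$ and $q$ range over enough integers to pin down a polynomial identity finishes it. In short: sum Proposition~\ref{3p1} against $(-1)^{n-k}\binom{n}{k}$ — equivalently evaluate the associated generating polynomial at $y=0$ — identify the result as the count of an all-internal configuration which is a lone binomial coefficient, and finally invoke polynomiality in $r$ to upgrade from integers to $\mathbb C$. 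The hard part is purely the index-chasing that shows the collapsed expression is $\binom{r}{q}$ and nothing else.
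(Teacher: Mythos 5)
Your core argument is the paper's own: weight each tree in $\Gamma_{n,x,q}$ by a sign recording the number of $N$-leaves, cancel via a sign-reversing involution on the colored leaves, identify the survivors as the trees with no colored leaves (all $n$ optional vertices internal), count those through Chen's bijection as ${n+x+q-2\choose n+x-1}(n+x+q-1)!$, and finally upgrade by polynomiality. The first paragraph's unsigned double count is a detour you correctly abandon; also, writing the signed total as $\sum_k(-1)^{n-k}\binom{n}{k}(\text{trees with statistic } k+x)$ double-counts the factor $\binom{n}{k}$ already present in Proposition~\ref{3p1}, though your intent is clear, and the involution you leave implicit (the paper flips the color of the colored leaf of smallest label) is indeed standard.

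The one genuine gap is in the extension from nonnegative integer parameters to $r\in\mathbb{C}$, $q\in\mathbb{Z}$. Polynomiality in $r$ only covers $q\geq 0$, since the combinatorial model requires the theorem's $q$ to be a nonnegative integer. For $q<0$ you assert that both sides vanish ``essentially by the support of the binomial coefficients''; this justification fails in the range $-n\leq q<0$, where $n+q\geq 0$ and the left-hand terms $\binom{k+r}{n+q}$ are individually nonzero for generic $r\in\mathbb{C}$. The sum does still vanish, but for a different reason: $\binom{k+r}{n+q}$ is a polynomial in $k$ of degree $n+q<n$, and the $n$-th finite difference $\sum_k(-1)^{n-k}\binom{n}{k}p(k)$ annihilates every polynomial $p$ of degree less than $n$ --- this is precisely the argument the paper supplies for this case. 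Your proposed separate case $q>n$ is spurious: it is already covered by the combinatorial argument plus polynomiality, and neither side vanishes there for generic $r$. With the finite-difference argument inserted for $-n\le q<0$ (the case $q<-n$ being trivial since then $n+q<0$ as well), your proof closes and coincides with the paper's.
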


\begin{proof}
If we weigh each tree $T$ in $\Gamma_{n,x,q}$ by $(-1)^{|lev_N(T)|}$, we observe that
the number $\sum_{k=0}^n(-1)^{n-k}{n\choose
k}{k+n+x+q-2\choose n+q-1}(n+x+q-1)!$ counts the total weight over
all trees in $\Gamma_{n,x,q}$ from Proposition
\ref{3p1}.
However, there is a
very simple involution on $\Gamma_{n,x,q}$: we change $Y$ into $N$ if the
colored leaf with the smallest label is colored $Y$ and vice
versa. From this simple involution, all weights over $\Gamma_{n,x,q}$
cancel out except for those trees, whose leaves are all vertices with labels in
$[q]$, i.e., no colored leaves. Therefore, the total weight here also counts the
number of trees in $\Gamma_{n,x,q}$ in which the leaves
are all vertices in $[q]$. We can obtain this number by enumerating
the corresponding sets of matches according to Theorem \ref{3t1} as
follows: since all leaves of those trees are in $[q]$, we only need
to select out $n+x-1$ vertices from
$\{(n+x+q+1)^*,\ldots,2(n+x+q-1)^*\}$ to be leaves of the matches and
match up with the roots of the matches. Therefore, the number of trees in
$\Gamma_{n,x,q}$, whose leaves are all vertices in $[q]$ is
$$
{n+x+q-2\choose n+x-1}(n+x+q-1)!.
$$
Hence, for $n,x,q\geq 0$,
\begin{align}\label{3e2}
\sum_{k= 0}^n(-1)^{n-k}{n\choose k}{k+n+x+q-2\choose
n+q-1}={n+x+q-2\choose q-1}.
\end{align}
Next we look into~\eqref{3e2} in detail: firstly for any fixed
$q\geq 1$, both sides of \eqref{3e2} are polynomials in $x$. Since
it holds for all $x\geq 0$, it holds for any $x\in \mathbb{C}$.
Secondly, for any $1-n\leq q<1$, the right side of \eqref{3e2}
equals $0$ because $q-1<0$. However, the term ${k+n+x+q-2\choose
n+q-1}$ on the left side is a polynomial in $k$ with degree $<n$, so
the left side is also $0$ by finite difference argument. At last,
for $q<1-n$, both $q-1<0$ and $n+q-1<0$ hold, which obviously leads
to $0$ for both sides of \eqref{3e2}. Hence, we can state that
\eqref{3e2} holds for all $x\in \mathbb{C}, q\in\mathbb{Z}$.
Setting $n+x+q-2=r$ in \eqref{3e2} will complete the proof.
\end{proof}

The identity in the above theorem is implied by Vandermonde's convolution:
\begin{align}\label{1e3}
\sum_{k=0}^n{r\choose k}{m\choose n-k}={r+m\choose n}.
\end{align}
For certain generalizations of Vandermonde's convolution,
we refer to~\cite{11,3,4} and references therein.

 Similarly, if we weigh each tree $T$ in $\Gamma_{n,x,q}$ by
$z^{|lev_{_N}(T)|}$ instead, then we will obtain the following

\begin{theorem}\label{2t4}
For $n,q\geq 0$, $x\in\mathbb{C}$, we have
\begin{align}\label{3e3}
\sum_{k=0}^n{n\choose k}{2n+x+q-k-2\choose
n+q-1}z^k=
\sum_{k=0}^n{n\choose k}{n+x+q-2\choose k+q-1}(z+1)^k.
\end{align}
\end{theorem}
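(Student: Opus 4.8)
The plan is to compute the weighted sum
$$
W_{n,x,q}(z)\ :=\ \sum_{T\in\Gamma_{n,x,q}} z^{\,|lev_N(T)|}
$$
in two different ways. One evaluation will reproduce the left-hand side of \eqref{3e3} and the other the right-hand side, both multiplied by the common factor $(n+x+q-1)!$; equating them and cancelling the factorial gives \eqref{3e3} for integer $x$, and a polynomial-degree argument then extends it to all $x\in\mathbb{C}$. (One may assume $n+x+q\ge1$, the case $n=x=q=0$ being trivial as both sides of \eqref{3e3} then vanish.)

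First I would read off one evaluation of $W$ directly from Proposition~\ref{3p1}. In any $T\in\Gamma_{n,x,q}$ the $q$ vertices of $[q]$ are uncolored leaves, so the remaining $n+x$ vertices split as internal, $Y$-leaf, or $N$-leaf; that is, $|int(T)|+|lev_Y(T)|+|lev_N(T)|=n+x$. Hence a tree counted by Proposition~\ref{3p1}, namely one with $|int(T)|+|lev_Y(T)|=k+x$, necessarily has $|lev_N(T)|=n-k$ and therefore contributes $z^{n-k}$ to $W$. Summing the count of Proposition~\ref{3p1} against $z^{n-k}$ and re-indexing $k\mapsto n-k$ yields
$$
W_{n,x,q}(z)=(n+x+q-1)!\sum_{k=0}^{n}{n\choose k}{2n+x+q-k-2\choose n+q-1}z^{k},
$$
i.e. $(n+x+q-1)!$ times the left-hand side of \eqref{3e3}.

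Next I would compute $W$ via a different decomposition of a tree $T\in\Gamma_{n,x,q}$: one chooses the set $B\subseteq\{q+x+1,\dots,q+x+n\}$ of free vertices that are to be colored leaves (the other free vertices then being internal), then an underlying labeled plane tree on $[n+x+q]$ whose leaf set is exactly $[q]\cup B$, and finally a $Y/N$-coloring of the vertices of $B$. Writing $k=|B|$, there are ${n\choose k}$ choices of $B$; the colorings contribute $(z+1)^{k}$, since each colored leaf weighs $1$ if $Y$ and $z$ if $N$; and, exactly as in the derivation of \eqref{3e2} via Chen's bijection (Theorem~\ref{3t1}), the number of labeled plane trees on $[n+x+q]$ with a prescribed internal set of size $x+(n-k)$ and prescribed leaf set of size $q+k$ is ${n+x+q-2\choose q+k-1}(n+x+q-1)!$ — one selects which $q+k-1$ of the $n+x+q-2$ starred labels serve as roots of the matches and then matches the $n+x+q-1$ roots to the leaves. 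This gives
$$
W_{n,x,q}(z)=(n+x+q-1)!\sum_{k=0}^{n}{n\choose k}{n+x+q-2\choose k+q-1}(z+1)^{k}.
$$

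Equating the two expressions and cancelling $(n+x+q-1)!$ proves \eqref{3e3} for all integers $n,q\ge0$ and $x\ge0$, as an identity of polynomials in $z$; then, for fixed $n,q$, matching the coefficient of each power $z^{k}$ gives an equality of polynomials in $x$ valid for all $x\in\mathbb{Z}_{\ge0}$, hence for all $x\in\mathbb{C}$. The steps that need care are the two bookkeeping facts — that the constraint of Proposition~\ref{3p1} forces $|lev_N(T)|=n-k$, and that the number of plane trees with a prescribed leaf set equals ${n+x+q-2\choose k+q-1}(n+x+q-1)!$ — and I expect re-running the match count of Proposition~\ref{3p1} in this prescribed-leaf-set form to be the part most in need of careful checking; everything else is a routine double count.
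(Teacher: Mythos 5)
Your proposal is correct and follows essentially the same route as the paper: weight each tree of $\Gamma_{n,x,q}$ by $z^{|lev_N(T)|}$, obtain the left-hand side from Proposition~\ref{3p1}, and obtain the right-hand side by regrouping the trees according to which free vertices are colored leaves and counting the underlying trees via Chen's bijection, each colored leaf contributing a factor $(z+1)$. The only cosmetic difference is that you index the second count by the number of colored leaves rather than by the number of non-prescribed internal vertices, which are complementary, and you spell out the polynomial extension to $x\in\mathbb{C}$ that the paper leaves implicit.
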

\begin{proof}
Obviously, from Proposition \ref{3p1}, that the total weight over all
trees in $\Gamma_{n,x,q}$ is
$$
\sum_{k=0}^nz^{n-k}{n\choose k}{k+n+x+q-2\choose n+q-1}(n+x+q-1)!.
$$
Counting in another approach, we firstly have the total weight over all trees
$T\in \Gamma_{n,x,q}$ with $|int(T)|=x+k$
$$
{n\choose k}{n+x+q-2\choose n+q-1-k}(n+x+q-1)!(z+1)^{n-k}.
$$
It follows from Theorem~\ref{3t1} that there are ${n\choose k}$ ways to
choose the roots of the matches (internal vertices in the corresponding
tree) besides the $x$ prescribed ones. Furthermore, there are ${n+x+q-2\choose
n+q-1-k}$ ways to choose the remaining roots from starred vertices and
arrange all the roots in $(n+x+q-1)!$ different ways. Finally, all
$n-k$ leaves among all leaves except for those in $[q]$ can be either
colored $Y$ or $N$, whence each of them contributes a weight of $(z+1)$.
Summing over all $0\leq k\leq n$, we also obtain the
total weight over all trees in $\Gamma_{n,x,q}$
$$
\sum_{k=0}^n{n\choose k}{n+x+q-2\choose
n+q-1-k}(n+x+q-1)!(z+1)^{n-k}.
$$
Hence,
$$
\sum_{k=0}^nz^{n-k}{n\choose k}{k+n+x+q-2\choose
n+q-1}=\sum_{k=0}^n{n\choose k}{n+x+q-2\choose n+q-1-k}(z+1)^{n-k},
$$
completing the proof.
\end{proof}

Although eq.~\eqref{2t4} appears quite fundamental, it leads to many results people interested.

\begin{corollary}[Stanley~\cite{12}]
For $n\geq 0$, we have
\begin{align}
\sum_{k=0}^n{n\choose k}^2z^k=\sum_{j=0}^n{n\choose j}{2n-j\choose
n}(z-1)^j.
\end{align}
\end{corollary}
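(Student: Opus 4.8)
The plan is to obtain this identity as a direct specialization of Theorem~\ref{2t4}, followed by an elementary change of variable. The key observation is that the free parameters $x$ and $q$ in~\eqref{3e3} can be tuned so that the binomial coefficients on both sides collapse to exactly the shapes appearing in the corollary.

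First I would look at the right-hand side of~\eqref{3e3}, namely $\sum_{k=0}^n\binom{n}{k}\binom{n+x+q-2}{k+q-1}(z+1)^k$, and try to turn the inner binomial into $\binom{n}{k}$. Setting $q=1$ replaces $\binom{n+x+q-2}{k+q-1}$ by $\binom{n+x-1}{k}$, and then setting $x=1$ reduces it to $\binom{n}{k}$; both choices are admissible, since the hypotheses of Theorem~\ref{2t4} only demand $q\geq 0$ and $x\in\mathbb{C}$. With $x=q=1$ the right side becomes $\sum_{k=0}^n\binom{n}{k}^2(z+1)^k$.

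Next I would evaluate the left-hand side of~\eqref{3e3} at $x=q=1$: the binomial $\binom{2n+x+q-k-2}{n+q-1}$ becomes $\binom{2n-k}{n}$, so the left side is $\sum_{k=0}^n\binom{n}{k}\binom{2n-k}{n}z^k$. Hence Theorem~\ref{2t4} specializes to
\[
\sum_{k=0}^n\binom{n}{k}\binom{2n-k}{n}z^k=\sum_{k=0}^n\binom{n}{k}^2(z+1)^k .
\]
Both sides are polynomials in $z$, so this identity holds for every $z\in\mathbb{C}$; substituting $z\mapsto z-1$ and renaming the summation index on the left to $j$ then yields exactly $\sum_{k=0}^n\binom{n}{k}^2z^k=\sum_{j=0}^n\binom{n}{j}\binom{2n-j}{n}(z-1)^j$.

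There is essentially no hard step here: the corollary is immediate once the correct specialization is spotted. The only points requiring a moment of care are checking that $x=q=1$ lies in the range of validity of Theorem~\ref{2t4}, and noting that the substitution $z\mapsto z-1$ is legitimate because~\eqref{3e3} is a polynomial identity in $z$ (it is established for $z\in\mathbb{Z}_{\geq 0}$ by the weighting argument, but both sides are finite polynomials, so it extends to all complex $z$). Alternatively, one could give a self-contained combinatorial derivation by rerunning the two-way count underlying Theorem~\ref{2t4} directly on $\Gamma_{n,1,1}$ with the weight $z^{|lev_N(T)|}$, but the specialization route is the shortest.
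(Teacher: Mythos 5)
Your proposal is correct and is exactly the paper's proof: the authors also obtain the corollary by setting $x=q=1$ in~\eqref{3e3} and then replacing $z$ by $z-1$. Your additional remarks on the admissibility of the specialization and the polynomial extension in $z$ are fine but not needed beyond what the paper states.
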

\begin{proof}
Taking $x=q=1, z=z-1$ in \eqref{3e3}, we obtain the bijective proof problem
of Stanley \cite[(15)]{12} as stated in the corollary.
\end{proof}
The following corollary gives the new expression of the Narayana polynomials obtained in~\cite{2}
and in~\cite{10}.
\begin{corollary}
For $n\geq 0$, the Narayana numbers $N_{n,k}$ satisfy
\begin{align}
\sum_{k=1}^n N_{n,k}z^k=\sum_{k=0}^n \frac{1}{n+1}{n+1\choose k}{2n-k\choose n}(z-1)^k.
\end{align}
\end{corollary}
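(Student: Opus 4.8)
The plan is to derive this identity as a direct specialization of Theorem~\ref{2t4}, namely equation~\eqref{3e3}, by choosing the parameters so that the left-hand side becomes a generating polynomial in $z$ whose coefficients are essentially the Narayana numbers. First I would recall the standard identity $N_{n,k}=\frac{1}{n}\binom{n}{k}\binom{n}{k-1}$ and note the well-known fact that the Narayana polynomial $\sum_{k=1}^n N_{n,k}z^k$ can be written, after a shift of summation index, in a form involving $\binom{2n-k}{n}$-type coefficients; the target right-hand side $\sum_{k=0}^n \frac{1}{n+1}\binom{n+1}{k}\binom{2n-k}{n}(z-1)^k$ already looks like the right-hand side of~\eqref{3e3} with $q=1$, $z$ replaced by $z-1$, and $x$ chosen appropriately, since then $\binom{n+x+q-2}{k+q-1}=\binom{n+x-1}{k}$, and we want this to equal $\frac{1}{n+1}\binom{n+1}{k}$, which suggests taking $x$ so that $n+x-1=n+1$, i.e. $x=2$, together with absorbing the factor $\frac{1}{n+1}$.

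Concretely, I would substitute $q=1$, $x=2$, and $z\mapsto z-1$ into~\eqref{3e3}. On the right-hand side this gives $\sum_{k=0}^n\binom{n}{k}\binom{n+1}{k}(z-1)^k$; comparing with the desired $\sum_{k=0}^n\frac{1}{n+1}\binom{n+1}{k}\binom{2n-k}{n}(z-1)^k$, one checks the coefficient identity $\binom{n}{k}\binom{n+1}{k}=\frac{1}{n+1}\binom{n+1}{k}\binom{2n-k}{n}$? That is false in general, so instead I would look at the \emph{left-hand} side of~\eqref{3e3} with $q=1$, $x=2$: it becomes $\sum_{k=0}^n\binom{n}{k}\binom{2n-k}{n}z^k$ (before the shift), and the right-hand side becomes $\sum_{k=0}^n\binom{n}{k}\binom{n+1}{k}(z+1)^k$. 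Then the correct route is: apply~\eqref{3e3} with $q=1$, $x=2$ and the substitution $z\mapsto z-1$ to get
\[
\sum_{k=0}^n\binom{n}{k}\binom{2n-k}{n}(z-1)^k=\sum_{k=0}^n\binom{n}{k}\binom{n+1}{k}z^k .
\]
It remains only to identify $\sum_{k=0}^n\binom{n}{k}\binom{n+1}{k}z^k$ with $(n+1)\sum_{k=1}^n N_{n,k}z^k$ (the $k=0$ term contributing nothing on the Narayana side but $\binom{n}{0}\binom{n+1}{0}=1$ on the other—so I will need to be careful about index ranges, possibly writing $\binom{n}{k}\binom{n+1}{k}=\binom{n}{k-1}\binom{n+1}{k}\cdot\frac{?}{?}$), and then divide through by $n+1$; this recovers exactly the stated formula with $\binom{n+1}{k}\binom{2n-k}{n}$ on the right after renaming.

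The main obstacle I anticipate is the bookkeeping in the last identification step: one must verify the purely algebraic identity $\frac{1}{n}\binom{n}{k}\binom{n}{k-1}=\frac{1}{n+1}\binom{n}{k}\binom{n+1}{k}$ (equivalently $(n+1)\binom{n}{k-1}=n\binom{n+1}{k}\cdot\frac{\binom{n}{k}}{\binom{n}{k}}$, which reduces to $(n+1)\binom{n}{k-1}=n\binom{n+1}{k}$, i.e. $(n+1)\cdot\frac{n!}{(k-1)!(n-k+1)!}=n\cdot\frac{(n+1)!}{k!(n+1-k)!}$, true since both equal $\frac{(n+1)!}{(k-1)!(n-k+1)!}$ once one multiplies the right side by $k/k$ — a one-line check), and to reconcile the range $k=0,\dots,n$ appearing from~\eqref{3e3} with the range $k=1,\dots,n$ of the Narayana polynomial (the $k=0$ terms must match on both sides after the substitution). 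Once these elementary verifications are in place, the corollary follows immediately; no new combinatorial argument beyond Theorem~\ref{2t4} is required, which is why it is stated as a corollary.
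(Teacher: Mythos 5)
Your overall strategy---specialize Theorem~\ref{2t4} (eq.~\eqref{3e3}) and then massage the binomial coefficients---is exactly the paper's approach, but your execution breaks down at the parameter choice and at the ``one-line check'' you defer to the end. With $q=1$, $x=2$ the left side of \eqref{3e3} is $\sum_k\binom{n}{k}\binom{2n+1-k}{n}z^k$ (since $2n+x+q-k-2=2n+1-k$), not $\sum_k\binom{n}{k}\binom{2n-k}{n}z^k$ as you wrote; and, more fatally, the right-side coefficients $\binom{n}{k}\binom{n+1}{k}$ are \emph{not} proportional to $N_{n,k}$. Indeed
\begin{equation*}
\frac{\tfrac{1}{n+1}\binom{n}{k}\binom{n+1}{k}}{\tfrac{1}{n}\binom{n}{k}\binom{n}{k-1}}=\frac{n}{k},
\end{equation*}
a ratio depending on $k$, so no overall constant can reconcile the two polynomials. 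The algebraic identity you propose to verify, $(n+1)\binom{n}{k-1}=n\binom{n+1}{k}$, is false except at $k=n$: the left side is $\frac{(n+1)!}{(k-1)!\,(n-k+1)!}$ while the right side is $\frac{n}{k}\cdot\frac{(n+1)!}{(k-1)!\,(n-k+1)!}$. So the route through $q=1$, $x=2$ cannot be repaired by bookkeeping.

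The correct specialization (the one the paper uses) is $q=0$, $x=2$, $z\mapsto z-1$. Then the right side of \eqref{3e3} has coefficients $\binom{n}{k}\binom{n+x+q-2}{k+q-1}=\binom{n}{k}\binom{n}{k-1}=n\,N_{n,k}$, i.e.\ it is exactly $n$ times the Narayana polynomial (the $k=0$ term vanishes because $\binom{n}{-1}=0$), and the left side becomes $\sum_k\binom{n}{k}\binom{2n-k}{n-1}(z-1)^k$. Dividing by $n$ and using the (true) identity $\frac{1}{n}\binom{n}{k}\binom{2n-k}{n-1}=\frac{1}{n+1}\binom{n+1}{k}\binom{2n-k}{n}$ --- both sides equal $\frac{(2n-k)!}{k!\,(n-k)!\,(n+1-k)!}$ --- yields the stated corollary. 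Your instinct that only a parameter substitution plus an elementary binomial identity is needed was right; the specific parameters and the specific identity were not.
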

\begin{proof}
Setting $q=0,x=2, z=z-1$ in~\eqref{3e3} completes the proof.
\end{proof}

It is well-known that the large Schr\"{o}der number $S_n$ \cite{18,19}, which counts the number of
plane trees having $n$ edges with leaves colored by one of two colors (say color $Y$
and color $N$), equals the evaluation at $z=2$ in the $n$-th Narayana polynomial, i.e.,
$$
S_n=\sum_{k=1}^n N_{n,k}2^k.
$$
For the case $q=0,x=2$, i.e., $\Gamma_{n,2,0}$, Theorem~\ref{2t4} implies:

\begin{theorem}\label{2t7}
Denote $T_{n+1}$ the number of plane trees of $n+1$ edges with $2$ different internal, marked
vertices and bi-colored leaves.
Then, $T_{n+1}$ is equal to the number of plane trees having $n$ edges with $2$ different, marked
vertices and bi-colored leaves, i.e., $T_{n+1}={n+1\choose 2}S_n$.
\end{theorem}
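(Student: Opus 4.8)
The plan is to evaluate the labeled family $\Gamma_{n,2,0}$ in two different ways and then descend to unlabeled plane trees. First I would specialize Proposition~\ref{3p1} to $q=0,\ x=2$: a tree $T\in\Gamma_{n,2,0}$ with $|int(T)|+|lev_Y(T)|=k+2$ is counted by $\binom{n}{k}\binom{k+n}{n-1}(n+1)!$. Since $|int(T)|\ge 2$ and $|int(T)|+|lev_Y(T)|+|lev_N(T)|=n+2$, these classes partition $\Gamma_{n,2,0}$ over $0\le k\le n$, so summing and reindexing $k\mapsto n-k$ gives $|\Gamma_{n,2,0}|=(n+1)!\sum_{k=0}^{n}\binom{n}{k}\binom{2n-k}{n-1}$. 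Now I would evaluate Theorem~\ref{2t4} (i.e.\ \eqref{3e3}) at $z=1$ with $q=0,\ x=2$: its left side is exactly $\sum_{k}\binom{n}{k}\binom{2n-k}{n-1}$, while its right side is $\sum_{k}\binom{n}{k}\binom{n}{k-1}2^{k}=n\sum_{k}N_{n,k}2^{k}=nS_n$. Hence $|\Gamma_{n,2,0}|=n\,(n+1)!\,S_n$. (Equivalently, since each leaf of a tree in $\Gamma_{n,2,0}$ carries an independent $Y/N$ choice, $|\Gamma_{n,2,0}|$ is just the $z=1$ value of the total weight of $\Gamma_{n,2,0}$ used in the proof of Theorem~\ref{2t4}.)

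Second, I would count the same set through unlabeled plane trees. A member of $\Gamma_{n,2,0}$ is the same datum as: a rooted plane tree $\tau$ with $n+1$ edges (hence $n+2$ vertices), an ordered pair of distinct internal vertices of $\tau$ (the ones receiving labels $1$ and $2$), a labeling of the remaining $n$ vertices by $\{3,\dots,n+2\}$, and a $\{Y,N\}$-coloring of the leaves of $\tau$. Because rooted plane trees have no non-trivial automorphisms, forgetting the labels on the $n$ unmarked vertices and regarding the two marked vertices as an unordered pair exhibits $\Gamma_{n,2,0}$ as a $(2\cdot n!)$-to-$1$ cover of the family of plane trees with $n+1$ edges equipped with two distinct marked internal vertices and bi-colored leaves, i.e.\ of the family counted by $T_{n+1}$. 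Therefore $|\Gamma_{n,2,0}|=2\,n!\,T_{n+1}$, and comparison with the first paragraph gives $T_{n+1}=\tfrac{n(n+1)}{2}S_n=\binom{n+1}{2}S_n$.

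Finally, I would recognize $\binom{n+1}{2}S_n$ as the number of plane trees with $n$ edges, two marked vertices and bi-colored leaves: $S_n$ is the number of pairs (plane tree with $n$ edges, $\{Y,N\}$-coloring of its leaves), each such pair lives on $n+1$ vertices, so there are $\binom{n+1}{2}$ choices of an unordered pair of marked vertices, and the family therefore has cardinality $\binom{n+1}{2}S_n=T_{n+1}$, completing the proof.

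I expect the bookkeeping in the second paragraph to be the only genuine obstacle: one must verify that an internal vertex is allowed to coincide with the root on both sides of the correspondence, and that the cover has degree exactly $2\cdot n!$ rather than $n!$ or $2\cdot(n+1)!$ — this is precisely where rigidity of rooted plane trees and the ordered-versus-unordered convention for the two marks enter. The remaining steps are the routine specializations of Proposition~\ref{3p1} and Theorem~\ref{2t4}, together with the reindexing $k\mapsto n-k$.
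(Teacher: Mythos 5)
Your proof is correct and is essentially the paper's own argument: both compute the cardinality of $\Gamma_{n,2,0}$ as the $z=1$ evaluation in (the proof of) Theorem~\ref{2t4}, identify it with $n\,(n+1)!\,S_n$ via the Narayana polynomial at $2$, and then divide by $2!\,n!$ to pass to unlabeled plane trees with two marked internal vertices. The only difference is cosmetic: you spell out the rigidity of rooted plane trees and the degree $2\cdot n!$ of the unlabeling map, which the paper compresses into ``deleting the labels.''
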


\begin{proof}
From the proof of Theorem~\ref{2t4}, we know that the number of trees in $\Gamma_{n,2,0}$ is
\begin{align*}
\sum_{k=0}^n{n\choose k}{n\choose
k+1}(n+1)!2^{k}.
\end{align*}
Then, ``deleting" the labels we obtain the number of (unlabelled) plane trees of $n+1$ edges
with $2$ different internal, marked vertices and bi-colored leaves to be
\begin{align*}
\frac{1}{2!n!}\sum_{k=0}^n{n\choose k}{n\choose
k+1}(n+1)!2^{k}=\frac{(n+1)n}{2}\sum_{k=1}^n N_{n,k}2^k={n+1\choose 2}S_n,
\end{align*}
which completes the proof.
\end{proof}

Since the large (and small) Schr\"{o}der numbers satisfy the recurrence \cite{18}
\begin{align}
3(2n-1)S_n=(n+1)S_{n+1}+(n-2)S_{n-1}, n\geq 2
\end{align}
and $S_1=S_2=1$, we have the following corollary.

\begin{corollary}\label{2c8}
The numbers $T_n$ satisfy
\begin{align}
3(2n-1)(n-1)(n+2)T_{n+1}=n(n-1)(n+1)T_{n+2}+(n-2)(n+1)(n+2)T_n.
\end{align}
\end{corollary}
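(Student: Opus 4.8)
The plan is to reduce the claimed recurrence for $T_n$ to the Schr\"{o}der recurrence $3(2n-1)S_n=(n+1)S_{n+1}+(n-2)S_{n-1}$ (valid for $n\geq 2$) by means of the closed form $T_{n+1}=\binom{n+1}{2}S_n$ established in Theorem~\ref{2t7}. Since all combinatorial content is already packaged in that theorem, what remains is purely an exercise in index shifting and clearing denominators.

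First I would rewrite the closed form with the three index shifts needed, namely $T_n=\tfrac{n(n-1)}{2}S_{n-1}$, $T_{n+1}=\tfrac{(n+1)n}{2}S_n$ and $T_{n+2}=\tfrac{(n+2)(n+1)}{2}S_{n+1}$. Equivalently (for $n\geq 2$, so that none of the factors vanish), $S_{n-1}=\tfrac{2T_n}{n(n-1)}$, $S_n=\tfrac{2T_{n+1}}{(n+1)n}$ and $S_{n+1}=\tfrac{2T_{n+2}}{(n+2)(n+1)}$.

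Next I would substitute these three expressions into the Schr\"{o}der recurrence and multiply both sides by the common factor $\tfrac12\,n(n-1)(n+1)(n+2)$. On the right, the coefficient $(n+1)$ in front of $S_{n+1}$ cancels the matching $(n+1)$ in its denominator, producing $n(n-1)(n+1)\,T_{n+2}$; the coefficient $(n-2)$ in front of $S_{n-1}$ together with the denominator $n(n-1)$ produces $(n-2)(n+1)(n+2)\,T_n$. On the left, $3(2n-1)S_n$ becomes $3(2n-1)(n-1)(n+2)\,T_{n+1}$. Collecting these gives exactly the asserted identity $3(2n-1)(n-1)(n+2)T_{n+1}=n(n-1)(n+1)T_{n+2}+(n-2)(n+1)(n+2)T_n$.

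I do not anticipate a genuine obstacle here: the only thing to be careful about is the bookkeeping of the three index shifts and checking that the single multiplier $\tfrac12\,n(n-1)(n+1)(n+2)$ simultaneously clears every denominator and leaves polynomial coefficients in $n$. I would also remark, for completeness, that the recurrence inherits the range $n\geq 2$ from the Schr\"{o}der recurrence, together with the base values coming from $S_1=S_2=1$.
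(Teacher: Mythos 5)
Your proposal is correct and follows exactly the route the paper intends: substituting $T_{n+1}=\binom{n+1}{2}S_n$ (with the two index shifts) into the Schr\"{o}der recurrence and clearing denominators, which is precisely why the paper states the corollary without further proof. The algebra checks out, and your remark about the range $n\geq 2$ is a sensible addition.
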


It would be interesting to give direct bijections between the two kinds of trees
defined in Theorem \ref{2t7} and the recurrence in Corollary \ref{2c8}.
\par

Based on Theorem \ref{3e3}, we can also give a new expression for the generating polynomials of
unicellular maps.
A unicellular map is a triangulation, that is a cell-complex of a closed orientable surface.
The number of handles of this surface is called the genus of the map. Denote $A(n,g)$ the number
of unicellular maps of genus $g$ with $n$ edges. The Harer-Zagier formula \cite{15} gives a generating
polynomial for these numbers, which reads
\begin{align}\label{2e3}
\sum_{g\geq 0} A(n,g)x^{n+1-2g}=\frac{(2n)!}{2^n n!}\sum_{k\geq 1}2^{k-1}{n\choose k-1}{x\choose k}.
\end{align}
Now we can give a new expression:
\begin{corollary}
\begin{align}\label{2e4}
\sum_{g\geq 0} A(n,g)x^{n+1-2g}=\frac{(2n)!}{2^n n!}\sum_{k\geq 0}{n\choose k}{x+n-k\choose n+1}.
\end{align}
\end{corollary}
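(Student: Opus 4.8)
The plan is to derive \eqref{2e4} directly from the Harer--Zagier formula \eqref{2e3} together with Theorem~\ref{2t4}. Concretely, it suffices to establish the binomial identity
$$
\sum_{k\geq 0}\binom{n}{k}\binom{x+n-k}{n+1}=\sum_{k\geq 1}2^{k-1}\binom{n}{k-1}\binom{x}{k},
$$
since multiplying both sides by $\tfrac{(2n)!}{2^{n}n!}$ then turns the right-hand side of \eqref{2e3} into the right-hand side of \eqref{2e4}, the left-hand sides of \eqref{2e3} and \eqref{2e4} being identical.

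To prove this identity I would specialize eq.~\eqref{3e3} of Theorem~\ref{2t4}: take $q=2$, replace the free complex parameter $x$ appearing there by $x-n$, and set $z=1$. These choices are admissible since $q=2\geq 0$ and $x-n\in\mathbb{C}$. With $q=2$ the binomial $\binom{2n+x+q-k-2}{n+q-1}$ on the left of \eqref{3e3} becomes $\binom{2n+x-k}{n+1}$, which after $x\mapsto x-n$ is $\binom{n+x-k}{n+1}$; likewise $\binom{n+x+q-2}{k+q-1}$ on the right becomes $\binom{x}{k+1}$ and $(z+1)^k$ becomes $2^k$. Hence \eqref{3e3} reads
$$
\sum_{k=0}^{n}\binom{n}{k}\binom{n+x-k}{n+1}=\sum_{k=0}^{n}\binom{n}{k}\binom{x}{k+1}2^{k}.
$$

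Finally I would reindex the right-hand sum by $j=k+1$, obtaining $\sum_{j=1}^{n+1}2^{j-1}\binom{n}{j-1}\binom{x}{j}$, and note that $\binom{n}{j-1}=0$ for $j>n+1$, so this equals $\sum_{k\geq 1}2^{k-1}\binom{n}{k-1}\binom{x}{k}$, which is exactly the sum in \eqref{2e3}; this proves the displayed identity and hence \eqref{2e4}. There is no real obstacle here: the corollary is a clean specialization of Theorem~\ref{2t4} matched against the known form \eqref{2e3}. The only points requiring attention are routine bookkeeping — verifying that the parameter hypotheses of Theorem~\ref{2t4} ($n,q\geq 0$, $x\in\mathbb{C}$) are respected by the substitution $q=2$, $x\mapsto x-n$, $z=1$, and checking that the nominal summation ranges ``$k\geq 0$'' and ``$k\geq 1$'' cause no trouble because all the binomial coefficients involved vanish outside a finite window.
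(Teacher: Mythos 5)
Your proposal is correct and follows exactly the paper's own route: the authors likewise obtain \eqref{2e4} by specializing Theorem~\ref{2t4} with $q=2$, $x\mapsto x-n$, $z=1$ and matching the resulting identity against the Harer--Zagier formula \eqref{2e3}. Your additional bookkeeping on the reindexing $k\mapsto k+1$ and the vanishing of terms outside the finite range is just a more explicit version of what the paper leaves implicit.
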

\begin{proof}
Setting $q=2, x=x-n, z=1$ in \eqref{3e3}, we obtain
\begin{align*}
\sum_{k\geq 0}{n\choose k}{x+n-k\choose n+1}=\sum_{k\geq 1}{n\choose k-1}{x\choose k}2^{k-1},
\end{align*}
whence the corollary.
\end{proof}

In fact, there is also an explicit formula for $A(n,g)$ given firstly by Walsh and Lehman \cite{16}. In
the Lehman-Walsh formula, $A(n,g)$ is connected to the number of certain permutations with only odd
cycles \cite{17,20}.
Specifically, let $O(n+1,g)$ denote the number of permutations on $[n+1]$ which consists
of $n+1-2g$ odd cycles, then the Lehman-Walsh formula can be expressed as
\begin{align}
A(n,g)=\frac{(2n)!}{(n+1)!n!2^{2g}}O(n+1,g).
\end{align}
On the other hand, $A(n,g)$ should be equal to the coefficient of the term $x^{n+1-2g}$ on the
right side of the eq.~\eqref{2e3}. To the best of our knowledge, there is no simple combinatorial argument to
show that the coefficient is equal to the Lehman-Walsh formula. However, in the following,
we will show how to obtain the Lehman-Walsh formula from the new expression in above corollary.
\par
Firstly, $A(n,g)$ is equal to the coefficient of the term $x^{n+1-2g}$ on the right side of the
eq.~\eqref{2e4}. We further set $A(n,g)=\frac{(2n)!}{2^n n!(n+1)!}\bar{A}(n,g)$ and denote
the coefficient of the term $x^m$ in the function $f(x)$ as $[x^m]f(x)$.
Then, we have
\begin{align}\label{2e5}
&\bar{A}(n,g)\nonumber\\
&=\sum_{k=0}^n {n\choose k}[x^{n+1-2g}](x+n-k)(x+n-k-1)\cdots x(x-1)\cdots (x-k)\nonumber\\
&=\sum_{k=0}^n {n\choose k}[x^{n+2-2g}][(x+n-k)(x+n-k-1)\cdots x][x(x-1)\cdots (x-k)]
\end{align}
Note there holds
\begin{align}
x(x+1)(x+2)\cdots (x+n-1)&=\sum_k C(n,k)x^k,\\
x(x-1)(x-2)\cdots (x-n+1)&=\sum_k (-1)^{n-k}C(n,k)x^k,
\end{align}
where $C(n,k)$ is the unsigned Stirling number of the first kind, i.e., $C(n,k)$ counts the
number of permutations on $[n]$ with $k$ cycles. Therefore, from eq.~\eqref{2e5} we have
\begin{align}
\bar{A}(n,g)=\sum_{k=0}^n {n\choose k}\sum_{i+j=n+2-2g}C(n-k+1,i)(-1)^{k+1-j}C(k+1,j).
\end{align}
Inspecting this expression, it is not clear why the right hand side should be always a positive number.
However, we shall prove

\begin{theorem}
For $n,g\geq 0$, we have
\begin{align}\label{2e6}
\sum_{k=0}^n {n\choose k}\sum_{i+j=n+2-2g}C(n-k+1,i)(-1)^{k+1-j}C(k+1,j)=2^{n-2g}O(n+1,g).
\end{align}
\end{theorem}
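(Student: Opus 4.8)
The plan is to interpret both sides of \eqref{2e6} combinatorially and exhibit an explicit sign-reversing involution. Recall that $\bar{A}(n,g)$ was obtained by expanding a product of two ``rising-then-falling'' factorial blocks and extracting a coefficient; by the standard expansions into unsigned Stirling numbers of the first kind, the left-hand side of \eqref{2e6} counts, with sign $(-1)^{k+1-j}$, quadruples consisting of a $k$-subset $B\subseteq[n]$, a permutation $\sigma$ on a set of size $n-k+1$ with $i$ cycles, and a permutation $\tau$ on a set of size $k+1$ with $j$ cycles, where $i+j=n+2-2g$. The first step is therefore to set up a clean model: pairs of permutations $(\sigma,\tau)$ on complementary ground sets (of sizes $n-k+1$ and $k+1$, sharing one ``pivot'' element so the total is $n+2$ symbols counted across a split), tagged by the subset $B$ that records the split, with the sign depending only on the parities of the cycle counts through $(-1)^{k+1-j}=(-1)^{(k+1)-j}$, i.e.\ the sign is $+1$ exactly when $\tau$ has all cycles of one parity class relative to $k+1$. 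Because $C(m,\ell)$ and $O(\cdot,\cdot)$ differ by which cycles survive a parity filter, the natural target is: the surviving (sign-$+$) objects should be pairs of permutations with \emph{only odd cycles}, and $2^{n-2g}$ should count a choice of orientation/color on something of size $n-2g$.

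The second step is the involution itself. I would look for the \emph{smallest element} (in a fixed global order on the $n+2$ symbols, with the pivot placed last) that lies in an \emph{even-length cycle} of the combined permutation $\sigma\sqcup\tau$, and perform the classical cycle-merging/cycle-splitting move at that element: if it sits in an even cycle, split that cycle into two odd cycles (or merge two cycles into one even cycle) in the unique way dictated by that element and its image. This changes the number of cycles by $\pm1$, hence flips $i+j$ parity-wise in a way that flips the sign $(-1)^{k+1-j}$, provided one is careful that the move stays within the block structure (i.e.\ it acts on $\sigma$ alone or on $\tau$ alone, never mixing the two ground sets) and is compatible with the subset-tag $B$. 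The fixed points of this involution are exactly the configurations in which every cycle of both $\sigma$ and $\tau$ is odd; for such a configuration $i+j$ equals the number of odd cycles, and $i+j=n+2-2g$ forces the pair $(\sigma,\tau)$ to be, after gluing along the pivot, essentially a permutation on $[n+1]$ with $n+1-2g$ odd cycles — the objects counted by $O(n+1,g)$ — while the subset $B$ has now become free data recording how the cycles of that permutation get distributed between the two blocks.

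The third step is to account for the factor $2^{n-2g}$. Once the involution has reduced everything to all-odd-cycle pairs, the remaining sum is $\sum_{B}1$ over the allowed distributions of the cycle structure across the two blocks; I expect this to telescope (via a simple binomial identity, or a second small sign-free bijection) into $O(n+1,g)$ times $2^{n-2g}$, the power of two arising because each of the $n-2g$ ``non-pivot-cycle transitions'' can be assigned to either side. Concretely, gluing the $\sigma$-block and $\tau$-block along the shared pivot turns a pair into a single permutation on $n+2$ symbols with one distinguished element, and unmerging can be done in $2^{(\#\text{cycles})-1}$ ways once we forget the split — plug in $\#\text{cycles}=n+2-2g$ to land on $2^{n+1-2g}$, then correct by the pivot-handling to get $2^{n-2g}$.

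The main obstacle I anticipate is \textbf{bookkeeping the pivot and the subset-tag coherently}: the two Stirling blocks have sizes $n-k+1$ and $k+1$ summing to $n+2$, not $n+1$, so there is a genuine ``$+1$ overlap'' that must be handled consistently for the involution to be well-defined and for the fixed-point count to collapse to $O(n+1,g)$ rather than $O(n+2,\cdot)$; getting the orientation/color degrees of freedom to total exactly $2^{n-2g}$ (and not $2^{n+1-2g}$ or $2^{n-2g}\cdot(\text{something})$) will require pinning down precisely which transition is ``used up'' by the pivot. A safe fallback, if the fully bijective route proves delicate, is to prove \eqref{2e6} by generating functions: multiply by $x^{n+1}/n!$, sum over $n$, recognize $\sum_k C(m,k)x^k = x(x+1)\cdots(x+m-1)$ and its signed version, and reduce \eqref{2e6} to the known Harer–Zagier/Lehman–Walsh identity already invoked in the excerpt — but the combinatorial proof above is the one I would attempt first, since it is the one that explains the positivity.
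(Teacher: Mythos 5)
Your overall strategy---model the left-hand side by pairs of permutations on complementary ground sets tagged by the choice of $k$-subset, cancel signed objects by an involution, and count the all-odd-cycle survivors as $2^{n-2g}O(n+1,g)$---is indeed the shape of the paper's argument. But the specific involution you propose does not work. The summand is constrained to $i+j=n+2-2g$ \emph{exactly}, so any sign-reversing map must preserve the total number of cycles of the pair; your move (split an even cycle into two odd cycles, or merge two cycles into one even cycle) changes the cycle count by $\pm 1$ and therefore carries an object out of the set being counted. Worse, the sign $(-1)^{k+1-j}$ depends only on the $\tau$-block (it is $(-1)^{(\text{size of }\tau) - (\text{number of cycles of }\tau)}$), so even if you arranged to stay inside the index set, a split/merge performed inside the $\sigma$-block would change $i$ but leave the sign untouched. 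The paper avoids both problems by using two cycle-count-preserving involutions on pairs $(\alpha,\beta)$ over the ground set $\{0,1,\dots,n+1\}$ with $0$ anchored in $\alpha$ and $n+1$ in $\beta$: the first transfers the single element $\alpha(0)$ from the cycle of $0$ into the cycle of $n+1$ (cancelling every pair in which the cycle of $n+1$ has even length, and forcing the cycle of $0$ to be a singleton among survivors); the second transfers an \emph{entire} even cycle between $\alpha$ and $\beta$, which changes the element-count-minus-cycle-count of $\beta$ by an odd number and hence flips the sign while keeping $i+j$ fixed.

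This also resolves the two bookkeeping issues you flagged but left open. The ``$+1$ overlap'' is not an overlap at all: the two blocks live on disjoint sets of total size $n+2$, obtained by adjoining two anchor elements $0$ and $n+1$; the first involution kills everything except pairs where $0$ is a fixed point, and deleting that singleton leaves a genuine permutation of $[n+1]$ with $n+1-2g$ odd cycles. The power of two then comes out exactly, with no correction step: of the $n+1-2g$ odd cycles of that permutation, the one containing $n+1$ is forced into the $\beta$-block, and the remaining $n-2g$ cycles are distributed freely between the two blocks, giving $2^{n-2g}$ rather than your $2^{n+1-2g}$ ``to be corrected by pivot-handling.'' Without the first involution (which you do not have) there is no mechanism forcing $0$ to be a singleton, and the fixed-point count would not collapse to $O(n+1,g)$.
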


\begin{proof}
We firstly construct a set of objects which is counted by the left hand side of \eqref{2e6}.
Let $[n+1]^*=\{0,1,2,\ldots,n+1\}$ and consider the set $\mathcal{T}$ of all pairs
$(\alpha,\beta)$ where $\alpha$ is a permutation on $A\subset [n+1]^*$ with $0\in A$ while
$\beta$ is a permutation on $[n+1]^*\setminus A$ with $n+1\in[n+1]^*\setminus A$,
and where the total number of cycles in $\alpha$ and $\beta$ is $n+2-2g$.
Denote the difference between the number of elements in $\beta$ and the number of cycles
in $\beta$ as $d(\beta)$, and weigh each pair by $W[(\alpha,\beta)]=(-1)^{d(\beta)}$.
Then, the total weight over all pairs in $\mathcal{T}$ is counted by the left side of the
eq.~\eqref{2e6}, i.e.,
\begin{align}
\sum_{(\alpha,\beta)\in \mathcal{T}}W[(\alpha,\beta)]=
\sum_{k=0}^n {n\choose k}\sum_{i+j=n+2-2g}C(n-k+1,i)(-1)^{k+1-j}C(k+1,j).
\end{align}
Next, we will prove
\begin{align}
\sum_{(\alpha,\beta)\in \mathcal{T}}W[(\alpha,\beta)]=2^{n-2g}O(n+1,g).
\end{align}
Denote the length of the cycle containing $n+1$ in $\beta$ as $|\beta_{n+1}|$ while
the length of the cycle containing $0$ in $\alpha$ as $|\alpha_0|$.
\par
There is a bijection, $\phi$, between the set $X$ of pairs $(\alpha,\beta)$ with
$|\beta_{n+1}|=2i+1,|\alpha_0|\geq 2$ and the set $Y$ of pairs $(\alpha',\beta')$ with
$|\beta'_{n+1}|=2i+2,|\alpha'_0|\geq 1$, where $0\leq i<g$. $\phi$ is given by
\begin{align}
\phi\colon X \longrightarrow Y,\quad
 (\alpha,\beta)\mapsto (\alpha',\beta'),
\end{align}
where $\alpha'$ is obtained from $\alpha$ by removing the element $\alpha(0)$ while
$\beta'$ is obtained from $\beta$ by inserting $\alpha(0)$ between $n+1$ and $\beta(n+1)$,
i.e., $\beta'(n+1)=\alpha(0), \beta'(\alpha(0))=\beta(n+1)$. Reversing this switch we derive
the reverse map $\phi^{-1}$.
It is obvious that $(\alpha,\beta)$ and $(\alpha',\beta')=\phi((\alpha,\beta))$ carry opposite
signs, whence their weights will cancel. In particular, the total weight over all pairs
$(\alpha,\beta)$ where $|\beta_{n+1}|$ is even, will cancel.
\par
Since there are $n+2-2g$ cycles in $\alpha$ and $\beta$, $|\beta_{n+1}|\leq 2g+1$,
equality will be achieved when all other cycles are singletons (which implies $|\alpha_0|=1$).
Thus, after applying $\phi$, the total weight over all pairs $(\alpha,\beta)$ is
reduced to the total weight over all pairs $(\alpha,\beta)$ where $|\alpha_0|=1$,
$|\beta_{n+1}|=2i+1$ for $0\leq i\leq 2g+1$. We denotes the latter set by $U$.
\par
There is an involution, $\varphi$, over all pairs $(\alpha,\beta)$ in $U$ which
have at least one even cycle.
$\varphi$ is obtained as follows: consider the set of even cycles contained in $\alpha,\beta$.
There is a unique, even cycle containing the smallest element, $c$. If $c$ is contained in $\alpha$
then $\alpha'=\alpha\setminus c$, $\beta'=\beta \cup c$ and $\alpha'=\alpha\cup c$ and
$\beta'=\beta\setminus c$, otherwise.

Note from $\beta$ to $\beta'$, the total number of elements changes by an even number while
the total number of cycles changes by $1$ (odd), thus $\beta$ and $\beta'$ must carry opposite
signs. Therefore, their weight cancels.
\par
Applying $\varphi$, the total weight over all pairs $(\alpha,\beta)$ is further reduced
to the total weight over all pairs $(\alpha,\beta)$ which have total number of $n+2-2g$ odd cycles. It is
clear that all pairs in the latter set ($V$) carry a positive sign, whence the total weight
is equal to the total number of elements in $V$.
\par
Finally, each pair $(\alpha, \beta)\in V$ can be viewed as a partition of all cycles except the
cycle containing $n+1$, of a permutation on $[n+1]$ with $n+1-2g$ odd cycles, into two parts.
Conversely, given a permutation on $[n+1]$ with $n+1-2g$ odd cycles, there are $2^{n-2g}$
different ways to partition all cycles except the one containing $n+1$ into two parts.
Therefore, we have
\begin{align*}
\sum_{(\alpha,\beta)\in V}W[(\alpha,\beta)]=|V|=2^{n-2g}O(n+1,g).
\end{align*}
Hence,
\begin{align*}
\sum_{(\alpha,\beta)\in \mathcal{T}}W[(\alpha,\beta)]=2^{n-2g}O(n+1,g),
\end{align*}
completing the proof.
\end{proof}

Accordingly, we obtain the Lehman-Walsh formula
\begin{align*}
A(n,g)=\frac{(2n)!}{(n+1)!n!2^n}\bar{A}(n,g)=\frac{(2n)!}{(n+1)!n!2^{2g}}O(n+1,g).
\end{align*}

\section{Partial sums and special cases}

In this section, we consider further generalizations based on the discussion in Section $2$.

\begin{theorem} For $0\leq q$, $0\leq n_1\leq n$, $x\in\mathbb{C}$, we have
\begin{multline}\label{3e4}
\sum_{k=0}^{n_1}{n\choose k}{n+x+q+k-2\choose
n+q-1}z^{n-k}=\\
\sum_{k=0}^{n_1}{n\choose k}{n+x+q-2\choose q+n-1-k}\sum_{i=0}^{n_1-k}{n-k\choose i}z^{n-k-i}.
\end{multline}
\end{theorem}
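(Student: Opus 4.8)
\emph{Proof proposal.} The plan is to read \eqref{3e4} as the ``partial-sum'' analogue of Theorem~\ref{2t4}: after multiplying both sides by $(n+x+q-1)!$, each side should count the same weighted subfamily of $\Gamma_{n,x,q}$, namely all trees $T$ with $|int(T)|+|lev_Y(T)|\le n_1+x$, each weighted by $z^{|lev_N(T)|}$. The left-hand side will slice this family by the value of $|int(T)|+|lev_Y(T)|$, whereas the right-hand side will slice it by the value of $|int(T)|$; so the whole argument is just to show that both slicings cover exactly the same trees.

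First I would revisit the weighted form of Proposition~\ref{3p1} already exploited in the proof of Theorem~\ref{2t4}. Weight each $T\in\Gamma_{n,x,q}$ by $z^{|lev_N(T)|}$. Among the $n$ vertices with labels outside $[x+q]$, say $a$ are internal and $b=|lev_Y(T)|$ are $Y$-leaves; then $|int(T)|=x+a$, so $|int(T)|+|lev_Y(T)|=k+x$ forces $a+b=k$ and hence $|lev_N(T)|=n-a-b=n-k$. Thus the total weight of the trees with $|int(T)|+|lev_Y(T)|=k+x$ is $\binom{n}{k}\binom{k+n+x+q-2}{n+q-1}(n+x+q-1)!\,z^{n-k}$, and summing over $k=0,\dots,n_1$ identifies the left-hand side of \eqref{3e4}, times $(n+x+q-1)!$, with the total weight over all $T\in\Gamma_{n,x,q}$ satisfying $|int(T)|+|lev_Y(T)|\le n_1+x$ (the implicit lower bound is automatic since $|int(T)|\ge x$).

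Next I would reuse the second count in the proof of Theorem~\ref{2t4}, organized by $|int(T)|$: the number of ways to produce a tree of $\Gamma_{n,x,q}$ with $|int(T)|=x+k$ while leaving its $n-k$ colour-able leaves uncoloured is $\binom{n}{k}\binom{n+x+q-2}{n+q-1-k}(n+x+q-1)!$, and colouring exactly $i$ of them $Y$ (hence $n-k-i$ of them $N$) contributes a factor $\binom{n-k}{i}z^{n-k-i}$. Imposing $i\le n_1-k$ and summing over $0\le k\le n_1$ and $0\le i\le n_1-k$ produces precisely the right-hand side of \eqref{3e4} times $(n+x+q-1)!$. On such a tree $|int(T)|=x+k$ and $|lev_Y(T)|=i$, so the restriction $i\le n_1-k$ is exactly $|int(T)|+|lev_Y(T)|\le n_1+x$, and $k\le n_1$ is implied by it; hence the right-hand side counts the same weighted family as the left-hand side, and \eqref{3e4} holds for all integers $n\ge n_1\ge 0$, $q\ge 0$ and all integers $x\ge 0$. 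Finally, to reach $x\in\mathbb{C}$, each side of \eqref{3e4} is a polynomial in $x$ of degree at most $n+q-1$ (a finite sum of terms $\binom{n+x+q+k-2}{n+q-1}$, resp.\ $\binom{n+x+q-2}{q+n-1-k}$, times quantities free of $x$), and the two polynomials agree on all nonnegative integers, hence identically.

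The only genuinely delicate point is the translation of the two distinct summation ranges into the single tree-theoretic condition $|int(T)|+|lev_Y(T)|\le n_1+x$ — in particular verifying that the constraint $k\le n_1$ is redundant on both sides, and that no tree is counted twice. Beyond this bookkeeping I do not expect any obstacle; everything else is a direct reuse of Proposition~\ref{3p1} and of the double count in Theorem~\ref{2t4}, together with a routine polynomiality argument in $x$.
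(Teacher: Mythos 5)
Your proposal is correct and follows essentially the same route as the paper: both sides are identified, after multiplying by $(n+x+q-1)!$, with the total $z^{|lev_N(T)|}$-weight of the trees $T\in\Gamma_{n,x,q}$ satisfying $|int(T)|+|lev_Y(T)|\le x+n_1$, sliced once by $|int(T)|+|lev_Y(T)|$ (via Proposition~\ref{3p1}) and once by $|int(T)|$ (via the count in Theorem~\ref{2t4}). Your explicit polynomiality argument for extending to $x\in\mathbb{C}$ is a minor addition that the paper leaves implicit.
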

\begin{proof}
Firstly it is obvious from Proposition \ref{3p1} that the total weight over all
trees $T$ in $\Gamma_{n,x,q}$ with $|int(T)|+|lev_Y(T)|\leq x+n_1$ is
$$
\sum_{k=0}^{n_1}z^{n-k}{n\choose k}{k+n+x+q-2\choose n+q-1}(n+x+q-1)!.
$$
However, the total weight over all trees $T$ in $\Gamma_{n,x,q}$ with $|int(T)|=x+k$
($k\leq n_1$) is
$$
{n\choose k}{n+x+q-2\choose n+q-1-k}(n+x+q-1)!\sum_{i=0}^{n_1-k}{n-k\choose i}z^{n-k-i},
$$
since we can color at most $n_1-k$ leaves as $Y$ if there are $k$ roots (of the matches)
from $\{x+q+1,\ldots,x+q+n\}$.
Summing over all $0\leq k\leq n_1$, we also obtain the
total weight over all trees $T$ in $\Gamma_{n,x,q}$ with $|int(T)|+|lev_Y(T)|\leq x+n_1$
$$
\sum_{k=0}^{n_1}{n\choose k}{n+x+q-2\choose
n+q-1-k}(n+x+q-1)!\sum_{i=0}^{n_1-k}{n-k\choose i}z^{n-k-i}.
$$
Hence,
\begin{align*}
\sum_{k=0}^{n_1}z^{n-k}{n\choose k}{k+n+x+q-2\choose
n+q-1}
=\sum_{k=0}^{n_1}{n\choose k}{n+x+q-2\choose n+q-1-k}\sum_{i=0}^{n_1-k}{n-k\choose i}z^{n-k-i}.
\end{align*}
\end{proof}

\begin{corollary} For $0\leq q$, $0\leq n_1\leq n$, $x\in\mathbb{C}$, we have
\begin{multline}\label{3e5}
\sum_{k=0}^{n_1}(-1)^{n-k}{n\choose k}{k+x+q+n-2\choose q+n-1}\\
=\sum_{k=0}^{n_1}(-1)^{n-n_1}{n\choose k}{x+q+n-2\choose q+n-1-k}{n-k-1\choose n_1-k}.
\end{multline}
\end{corollary}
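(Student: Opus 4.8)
The plan is to derive \eqref{3e5} as the specialization $z=-1$ of the identity \eqref{3e4} established in the preceding theorem. Putting $z=-1$ into the left-hand side of \eqref{3e4} yields at once $\sum_{k=0}^{n_1}(-1)^{n-k}\binom{n}{k}\binom{k+x+q+n-2}{q+n-1}$, which is precisely the left-hand side of \eqref{3e5}; no further manipulation is needed there.

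For the right-hand side, after substituting $z=-1$ each summand acquires the factor
\[
\sum_{i=0}^{n_1-k}\binom{n-k}{i}(-1)^{n-k-i}.
\]
The key step is to evaluate this inner sum. Writing $m=n-k$ and $j=n_1-k$ and recalling the standard partial-sum identity for alternating binomial coefficients,
\[
\sum_{i=0}^{j}(-1)^{i}\binom{m}{i}=(-1)^{j}\binom{m-1}{j}
\]
(which follows by telescoping from $\binom{m}{i}=\binom{m-1}{i}+\binom{m-1}{i-1}$), and multiplying through by $(-1)^{m}$, the inner sum becomes $(-1)^{m+j}\binom{m-1}{j}=(-1)^{(n-k)+(n_1-k)}\binom{n-k-1}{n_1-k}$.

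It then remains to do the sign bookkeeping: since $(-1)^{(n-k)+(n_1-k)}=(-1)^{\,n+n_1-2k}=(-1)^{\,n+n_1}=(-1)^{\,n-n_1}$, the inner sum equals $(-1)^{n-n_1}\binom{n-k-1}{n_1-k}$, a quantity depending only on $k$ (and the fixed parameters $n,n_1$). Pulling it out of the $k$-sum on the right-hand side of the specialized \eqref{3e4} reproduces exactly the right-hand side of \eqref{3e5}, completing the argument. The only real obstacle is applying the partial-sum identity in the correct direction and tracking the parity of the exponent $(n-k)+(n_1-k)$ down to $n-n_1$; everything else is a direct substitution. (A purely combinatorial route via a sign-reversing involution is less convenient here, because the partial-sum restriction $|int(T)|+|lev_Y(T)|\leq x+n_1$ defining the relevant subset of $\Gamma_{n,x,q}$ is not preserved by the ``swap the smallest colored leaf'' involution used in the proof of \eqref{3e1}.)
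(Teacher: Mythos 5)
Your proof is correct, but it takes a genuinely different route from the paper's. You obtain \eqref{3e5} purely algebraically, by setting $z=-1$ in \eqref{3e4} and evaluating the inner sum via the standard telescoping identity $\sum_{i=0}^{j}(-1)^{i}\binom{m}{i}=(-1)^{j}\binom{m-1}{j}$; the sign bookkeeping $(-1)^{(n-k)+(n_1-k)}=(-1)^{n-n_1}$ is right, and the edge cases ($k=n_1=n$ giving $\binom{-1}{0}=1$, and $n_1=n$, $k<n$ giving $0$) check out. The paper instead proves \eqref{3e5} combinatorially: it weights trees $T\in\Gamma_{n,x,q}$ with $|int(T)|+|lev_Y(T)|\leq x+n_1$ by $(-1)^{|lev_N(T)|}$ and applies the same ``swap the smallest colored leaf'' involution as in the proof of \eqref{3e1}. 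Your parenthetical dismissal of that route is slightly off: the involution's failure to preserve the partial-sum restriction is not an obstacle but the whole point --- the only uncancelled trees are those on the boundary $|int(T)|+|lev_Y(T)|=x+n_1$ whose smallest colored leaf is $N$ (their images would leave the set), and counting these directly yields the right-hand side. The trade-off matters for the logical structure of the paper: the subsequent corollary derives precisely the partial alternating-sum identity you invoke, by comparing \eqref{3e4} at $z=-1$ with \eqref{3e5} and using that the $\binom{n}{k}$ form a basis. Under your derivation that later corollary would become circular, so within the paper's architecture the independent combinatorial proof is needed; as a standalone proof of \eqref{3e5}, however, yours is complete and shorter.
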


\begin{proof}
Note the left hand side is the total weight over all trees $T$ where $|int(T)|+|lev_N(T)|\leq n_1$ and each
tree $T$ is weighed by $(-1)^{|lev_N(T)|}$. (Note both sides have a factor $(n+x+q-1)!$, they will
be canceled out.) Similar as in the proof of Theorem $2.3$, we apply the involution.
In this case, all weight over trees $T$ where $|int(T)|+|lev_N(T)|<x+n_1$ or $|int(T)|+|lev_N(T)|=x+n_1$
and the colored leave with the smallest label is colored $Y$ will cancel. Thus, the total weight is equal to
the weight over trees $T$, where $|int(T)|+|lev_N(T)|=x+n_1$ and the colored leave with the smallest
label is colored $N$. This number is calculated as follows:
\begin{itemize}
\item[1.] select $k$ labels from $\{x+q+1,\ldots,x+q+n\}$ as the roots of the matches in ${n\choose k}$
different ways;
\item[2.] choose from the starred labels $n+q-1-k$ labels as other roots of the matches in
${n+x+q-2\choose n+q-1-k}$ ways;
\item[3.] color the leave with the smallest label $N$, and choose another $n_1-k$ leaves with labels in
  $\{x+q+1,\ldots,x+q+n\}$ in ${n-k-1\choose n_1-k}$ different ways and color all of them $Y$
  (the rest will be colored $N$).
\end{itemize}
Note that each such tree has weight $(-1)^{n-n_1}$. Summing over $0\leq k\leq n_1$,
we obtain for the total weight
  \begin{align*}
  \sum_{k=0}^{n_1}(-1)^{n-n_1}{n\choose k}{n+x+q-2\choose n+q-1-k}{n-k-1\choose n_1-k},
  \end{align*}
  which equals the right hand side of \eqref{3e5}, whence the corollary.
  \end{proof}

We remark that it is interesting to observe from the right hand side of \eqref{3e5} that
the partial sum on the left hand side alternates in sign as $n_1$ goes from $0$ to $n$.
Setting $x=q=1$ and $q=0,x=2$ respectively, we obtain

\begin{corollary}For $0\leq n_1\leq n$, we have
\begin{align}
\sum_{k=0}^{n_1}(-1)^{n_1+k}{n\choose k}{k+n\choose n}&=\sum_{k=0}^{n_1}{n\choose k}^2{n-k-1\choose n_1-k},\\
\sum_{k=0}^{n_1}N_{n,k+1}{n-k-1\choose n_1-k}&=\sum_{k=0}^{n_1}\frac{(-1)^{k+n_1}}{k+n+1}{n\choose k}{k+n+1\choose n}.
\end{align}
\end{corollary}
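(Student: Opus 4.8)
The plan is to derive both identities as direct specializations of \eqref{3e5}, followed by elementary simplification of the binomial coefficients. Both substitutions are admissible, since they leave the hypotheses $q\ge 0$, $x\in\mathbb{C}$, $0\le n_1\le n$ intact, and the common factor $(n+x+q-1)!$ has already been removed from \eqref{3e5}.

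First I would set $x=q=1$ in \eqref{3e5}. Then $x+q+n-2=n$ and $q+n-1=n$, so the left-hand side becomes $\sum_{k=0}^{n_1}(-1)^{n-k}\binom{n}{k}\binom{k+n}{n}$, while on the right-hand side $\binom{x+q+n-2}{q+n-1-k}=\binom{n}{n-k}=\binom{n}{k}$, so it becomes $(-1)^{n-n_1}\sum_{k=0}^{n_1}\binom{n}{k}^2\binom{n-k-1}{n_1-k}$. Dividing through by $(-1)^{n-n_1}$ and using $(-1)^{n-k}/(-1)^{n-n_1}=(-1)^{n_1-k}=(-1)^{n_1+k}$ yields the first displayed identity.

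For the second identity I would set $q=0$, $x=2$ in \eqref{3e5}. Now $x+q+n-2=n$ and $q+n-1=n-1$, so the two sides read $\sum_{k=0}^{n_1}(-1)^{n-k}\binom{n}{k}\binom{k+n}{n-1}$ and $(-1)^{n-n_1}\sum_{k=0}^{n_1}\binom{n}{k}\binom{n}{n-1-k}\binom{n-k-1}{n_1-k}$. On the right I would use $\binom{n}{n-1-k}=\binom{n}{k+1}$ together with $\binom{n}{k}\binom{n}{k+1}=n\,N_{n,k+1}$, and on the left the elementary identity $\frac1n\binom{k+n}{n-1}=\frac{1}{k+n+1}\binom{k+n+1}{n}$ (both sides equal $\frac{(k+n)!}{n!(k+1)!}$). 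Dividing the resulting equation by $n\,(-1)^{n-n_1}$ and again rewriting $(-1)^{n-k}/(-1)^{n-n_1}=(-1)^{k+n_1}$ gives exactly $\sum_{k=0}^{n_1}N_{n,k+1}\binom{n-k-1}{n_1-k}=\sum_{k=0}^{n_1}\frac{(-1)^{k+n_1}}{k+n+1}\binom{n}{k}\binom{k+n+1}{n}$.

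There is no genuine obstacle here: all the combinatorial content sits in the preceding corollary, and what remains is careful bookkeeping. The only points deserving a moment's attention are the correct handling of the two competing sign factors $(-1)^{n-k}$ and $(-1)^{n-n_1}$, the index shift $\binom{n}{n-1-k}=\binom{n}{k+1}$ that produces the Narayana numbers, and the one-line verification of $\frac1n\binom{k+n}{n-1}=\frac{1}{k+n+1}\binom{k+n+1}{n}$. A quick numerical check (for instance $n=2$, $n_1=1$, where the two sides of the first identity both equal $5$ and those of the second both equal $2$) confirms that the normalizations are right.
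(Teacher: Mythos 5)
Your proposal is correct and follows exactly the paper's route: the paper obtains this corollary by the single remark ``Setting $x=q=1$ and $q=0,x=2$ respectively'' in \eqref{3e5}, and your substitutions, sign bookkeeping, and the identities $\binom{n}{n-1-k}=\binom{n}{k+1}$ and $\frac{1}{n}\binom{k+n}{n-1}=\frac{1}{k+n+1}\binom{k+n+1}{n}$ supply precisely the omitted details. Nothing is missing.
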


\begin{corollary}The following partial sum holds:
\begin{align}
\sum_{k=0}^n(-1)^k{x\choose k}=(-1)^n{x-1\choose n}.
\end{align}
\end{corollary}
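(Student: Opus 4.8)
The plan is to prove this by a two‑line induction on $n$ using only Pascal's rule, so that the corollary needs essentially none of the machinery developed above. First I would record the base case $n=0$: the left side is $\binom{x}{0}=1$ and the right side is $(-1)^0\binom{x-1}{0}=1$. For the inductive step, assuming $\sum_{k=0}^{n-1}(-1)^k\binom{x}{k}=(-1)^{n-1}\binom{x-1}{n-1}$, I would compute
\[
\sum_{k=0}^{n}(-1)^k\binom{x}{k}=(-1)^{n-1}\binom{x-1}{n-1}+(-1)^n\binom{x}{n}=(-1)^n\left(\binom{x}{n}-\binom{x-1}{n-1}\right)=(-1)^n\binom{x-1}{n},
\]
the last step being Pascal's rule $\binom{x}{n}=\binom{x-1}{n-1}+\binom{x-1}{n}$, which is a polynomial identity in $x$ and therefore valid for every $x\in\mathbb{C}$. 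This already gives the corollary in full generality.

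A second, more ``combinatorial'' route in the spirit of the present note is available when $x$ is a nonnegative integer. I would read $\sum_{k=0}^{n}(-1)^k\binom{x}{k}$ as the total weight of the family of subsets $S\subseteq[x]$ with $|S|\le n$, each weighted by $(-1)^{|S|}$, and then apply the sign‑reversing involution that toggles membership of the element $1$, i.e. $S\mapsto S\,\triangle\,\{1\}$. This changes $|S|$ by $\pm1$ and hence flips the sign, so the only configurations that survive are those that cannot be modified without leaving the family, namely the $S$ with $|S|=n$ and $1\notin S$; there are $\binom{x-1}{n}$ of these, each of weight $(-1)^n$, which is the right side. To pass from nonnegative integers to arbitrary $x\in\mathbb{C}$ I would then note that both sides of the asserted identity are polynomials in $x$ of degree $n$ agreeing at infinitely many points. (The same cancellation could be phrased inside the tree/match model of Proposition~\ref{3p1}, but the set‑theoretic version is cleaner.)

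The honest assessment is that there is no real obstacle here: the whole content sits in the single use of Pascal's rule (respectively the single involution). The only genuine care needed is bookkeeping of signs, plus—if one takes the combinatorial route—the routine polynomiality argument for the extension in $x$ and a sanity check in the degenerate range $x<n$, where there are no subsets of size $n$ and correspondingly $\binom{x-1}{n}=0$. If instead one insisted on deducing this corollary from the partial‑sum identities of this section, one would have to specialize \eqref{3e4} or \eqref{3e5} so that the factor $\binom{n}{k}$ collapses; that is where the only actual computation would hide, and it is precisely why I would prefer the self‑contained inductive proof above.
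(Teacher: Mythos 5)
Your proof is correct, and it takes a genuinely different route from the paper's. The paper does exactly what you describe in your closing remark as the alternative: it compares the right-hand side of \eqref{3e4} evaluated at $z=-1$ with the right-hand side of \eqref{3e5} (their left-hand sides coincide), invokes the linear independence of the terms indexed by $k$ to extract the inner identity $\sum_{i=0}^{n_1-k}(-1)^i\binom{n-k}{i}=(-1)^{n_1-k}\binom{n-k-1}{n_1-k}$, and then replaces the integer $n-k$ by an arbitrary $x$ via the polynomiality argument. So in the paper this corollary is a byproduct of the tree-counting machinery, and its role is essentially to certify that the two expansions \eqref{3e4} and \eqref{3e5} are consistent. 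Your two-line induction via Pascal's rule $\binom{x}{n}=\binom{x-1}{n-1}+\binom{x-1}{n}$ is shorter, self-contained, and immediately valid for all $x\in\mathbb{C}$ without any interpolation step; your subset involution (toggling the element $1$, with survivors the $n$-subsets avoiding $1$) is a faithful miniature of the sign-reversing involutions the paper uses on colored trees, just transplanted to a cleaner ground set, and its polynomial-extension step is handled correctly. What your approach gives up is only the (mild) structural point the paper is making, namely that the identity falls out of the same $\Gamma_{n,x,q}$ framework as everything else in the section; what it buys is a complete and elementary proof that does not depend on the correctness of the two preceding theorems.
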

\begin{proof}
Comparing the right hand side of \eqref{3e4} when $z=-1$ and the right hand side of \eqref{3e5},
and noting that ${n\choose k}$ form a basis, we have
$$
\sum_{i=0}^{n_1-k}(-1)^i{n-k\choose i}=(-1)^{n_1-k}{n-k-1\choose n_1-k}.
$$
Since this holds for every $n-k$ as a polynomial of $n-k$, it holds for any $x$.
\end{proof}

\begin{theorem}For $0\leq n, 1\leq k, 0\leq t<k, q\in \mathbb{Z},x\in \mathbb{C},\omega_k=e^{\frac{j2\pi}{k}}, j^2=-1$,
we have
\begin{multline}\label{3e36}
\sum_{l=0}^n{kn+t\choose kl+t}{kl+x+q+kn+2t-2\choose q+kn+t-1}z^{kl}=\\
\frac{1}{k}\sum_{i=0}^{kn+t}{kn+t\choose i}{x+q+kn+t-2\choose q+kn+t-1-i}
                     z^{i-t}\sum_{l=1}^k \frac{(1+z\omega_k^l)^{kn+t-i}}{(\omega_k^l)^{t-i}}.
\end{multline}
\end{theorem}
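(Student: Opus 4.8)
The plan is to mimic the combinatorial double-counting that proved Theorem~\ref{2t4} and equation~\eqref{3e4}, but now tracking the number of $Y$-leaves modulo $k$ via a roots-of-unity filter. Concretely, I would work inside $\Gamma_{kn+t,\,x,\,q}$, weighting each tree $T$ by $z^{|lev_Y(T)|}$ (rather than $z^{|lev_N(T)|}$). First I would compute the total weight by conditioning on the number of internal vertices beyond the $x$ prescribed ones: if $|int(T)| = x + i$, then exactly $i$ of the $kn+t$ ``free'' vertices are roots, contributing ${kn+t \choose i}{x+q+kn+t-2 \choose q+kn+t-1-i}(kn+t+x+q-1)!$ ways to build the underlying labeled tree via Theorem~\ref{3t1}, and then each of the remaining $kn+t-i$ leaves is independently colored $Y$ (weight $z$) or $N$ (weight $1$), giving a factor $(1+z)^{kn+t-i}$. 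That yields one closed form for the total weight.

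Next I would compute the \emph{same} total weight by conditioning instead on $|int(T)| + |lev_Y(T)| = x + m$, which by Proposition~\ref{3p1} contributes ${kn+t \choose m}{m+kn+t+x+q-2 \choose kn+t+q-1}(kn+t+x+q-1)! \, z^{m}$ after replacing $n$ by $kn+t$ there and noting every tree counted has exactly $m$ of its free vertices ``active'' (root or $Y$-leaf), each such configuration carrying weight $z^{|lev_Y|}$ summed over which of the $m$ are roots — wait, more carefully: the quantity in Proposition~\ref{3p1} already counts all trees with $|int|+|lev_Y| = m+x$, and the $z$-weight of such a tree is $z^{|lev_Y(T)|}$, so I actually want to re-derive the weighted count directly: for fixed set $A$ of size $m$ of active free vertices, summing $z^{|A \cap lev_Y|}$ over the $2^{\,?}$ — no. The cleaner route, exactly as in Theorem~\ref{2t4}, is: total weight $= \sum_{m} z^{\,kn+t-m}{kn+t \choose m}{m+kn+t+x+q-2 \choose kn+t+q-1}(kn+t+x+q-1)!$, obtained by letting $m = |lev_N(T)|$ count the non-$Y$ free leaves; here one uses that the free vertices split as roots, $Y$-leaves, $N$-leaves and that Proposition~\ref{3p1} counts by $|int|+|lev_Y|$. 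After cancelling the common factorial, equating the two expressions gives a polynomial identity in $z$ generalizing~\eqref{3e3}; I would then reindex $m \mapsto kn+t-m$ to match the left side of~\eqref{3e36}, so that the left side of~\eqref{3e36} equals $\sum_i {kn+t \choose i}{x+q+kn+t-2 \choose q+kn+t-1-i} z^{kn+t - i - (kn+t)} \cdot$ [a partial/full binomial sum] — except the left side of~\eqref{3e36} is a \emph{restricted} sum over $l$ with $kl+t$ in place of a free index, i.e. only terms where the relevant exponent is $\equiv t \pmod k$.

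This is where the roots-of-unity filter enters and is the crux of the argument. To isolate, from the identity $\sum_{i=0}^{kn+t} c_i z^{i} = \sum_{m} (\text{stuff})\, z^{m}$, the sub-sum over $i \equiv t \pmod k$, I would apply the standard identity $\frac{1}{k}\sum_{l=1}^{k} \omega_k^{-lt}(1 + z\omega_k^l)^{M} = \sum_{i \equiv t \!\!\pmod k} {M \choose i} z^{i}$ with $M = kn+t-i$ appearing on the ``internal-vertex'' side. Doing the filter on the $(1+z)^{kn+t-i}$-side of the first computation replaces $(1+z)^{kn+t-i}$ by $\frac{1}{k}\sum_{l=1}^k \omega_k^{\,l(i-t)} z^{i-t}(1+z\omega_k^l)^{kn+t-i}$ — matching exactly the right side of~\eqref{3e36} once we identify $\omega_k^{l(i-t)} = (\omega_k^l)^{i-t} = (\omega_k^l)^{-(t-i)}$. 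On the other side the filter picks out precisely the terms whose $z$-exponent lies in the class of $t$, and after the reindexing $m = kn+t - (kl+t) = k(n-l)$ one reads off $\sum_{l=0}^{n}{kn+t \choose kl+t}{kl + x + q + kn + 2t - 2 \choose q + kn + t - 1} z^{kl}$ — wait, I must double-check the exponent bookkeeping: the free $N$-leaf count must come out $\equiv -t \equiv$ the right residue, which is exactly why the hypothesis singles out residue $t$ and why the offset $kl+t$ appears. The main obstacle, then, is purely bookkeeping: getting every index shift (between $|lev_N|$, $|lev_Y|$, $|int|$, and the filter variable $i$) consistent so that both the summand ${kl+x+q+kn+2t-2 \choose q+kn+t-1}$ and the power $z^{kl}$ on the left, and the $z^{i-t}$ and $(\omega_k^l)^{i-t}$ on the right, emerge correctly; the combinatorics itself is a routine adaptation of the $\Gamma_{n,x,q}$ argument already in hand. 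The extension from $x \geq 0$ and $q \geq 0$ integer to $x \in \mathbb{C}$, $q \in \mathbb{Z}$ is then handled exactly as in the proof of~\eqref{3e1}: fix the residue structure, observe both sides are polynomials in $x$ of the same degree agreeing at infinitely many points, and check the degenerate ranges of $q$ by a finite-difference/degree argument.
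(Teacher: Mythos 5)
Your proposal is correct and follows essentially the same route as the paper: both arguments reduce to the $\Gamma_{kn+t,x,q}$ version of Theorem~\ref{2t4} (weighting by the number of $N$-leaves, as you correctly settle on after discarding the $z^{|lev_Y|}$ weighting), isolate the terms in a fixed residue class modulo $k$, and then close the resulting partial binomial sum with the identity $\sum_{i}\binom{M}{a+ki}z^{a+ki}=\frac{1}{k}\sum_{l=1}^{k}(\omega_k^l)^{-a}(1+z\omega_k^l)^{M}$, which is exactly the Gould--Sprugnoli identity the paper invokes. The only cosmetic difference is that the paper restricts the combinatorial count to $|int(T)|+|lev_Y(T)|\equiv x+t \pmod{k}$ before summing, whereas you apply the roots-of-unity filter to the completed polynomial identity; these are interchangeable, and your index bookkeeping (the $z^{i-t}$ and $(\omega_k^l)^{i-t}$ factors arising from writing $z^{kl}=z^{i-t}z^{kl+t-i}$) is consistent with the paper's.
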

\begin{proof}
Considering the trees $T\in \Gamma_{kn,x,q}$ with $|int(T)|+|lev_Y(T)|=x+kl+t, l\geq 0$,
we obtain, along the lines of Theorem \ref{2t4}
\begin{multline*}
\sum_{l=0}^n {kn+t\choose kl+t}{kl+kn+x+q+2t-2\choose kn+t+q-1}z^{kn-kl}=\\
\sum_{i=0}^{kn+t}{kn+t\choose i}{kn+t+x+q-2\choose kn+t+q-i-1}\sum_{kl+t\geq i}{kn+t-i\choose kl+t-i}z^{kn-kl}.
\end{multline*}
Canceling out the term $z^{kn}$ and setting $z=z^{-1}$ in the above identity, the
second summation on the right hand side becomes
\begin{align*}
z^{i-t}\sum_{l\geq 0}{kn+t-i\choose kl+t-i}z^{kl+t-i}.
\end{align*}
From the identity~\cite[eq.~$(1.53)$]{14}, we have
\begin{align*}
\sum_{i\geq 0}{n\choose a+ki}x^{a+ki}=\frac{1}{k}\sum_{i=1}^k (\omega_k^i)^{-a}(1+x\omega_k^i)^{n}, k-1\geq a, a\in \mathbb{Z}.
\end{align*}
Therefore, setting $a=t-i,n=kn+t-i,k=k$ we obtain
\begin{align*}
\sum_{l\geq 0}{kn+t-i\choose kl+t-i}z^{kl+t-i}=\frac{1}{k}\sum_{l=1}^k(\omega_k^l)^{i-t}(1+z\omega_k^l)^{kn+t-i}
\end{align*}
and the proof follows.
\end{proof}

Finally, we summarize some particular evaluations of \eqref{3e36} in the following three
corollaries.

\begin{corollary}For $0\leq n, 1\leq k, 0\leq t<2, 0\leq s<4$, we have
\begin{align}
\sum_{l=0}^n{n\choose l}{n+x+q+l-2\choose n+q-1}z^{l}=
\sum_{l=0}^n{n\choose l}{x+q+n-2\choose q+n-1-l}z^l(1+z)^{n-l} \hfill
\end{align}\vspace{-0.5cm}
\begin{multline}
\sum_{l=0}^{n}2{2n+t\choose 2l+t}{2l+x+q+2n+2t-2\choose q+2n+t-1}z^{2l}=\\
\sum_{l=0}^{2n+t}{2n+t\choose l}{x+q+2n+t-2\choose q+2n+t-1-l}\frac{(z-1)^{2n+t-l}+(z+1)^{2n+t-l}}{z^{t-l}}
\end{multline}\vspace{-0.5cm}
\begin{multline}
\sum_{l=0}^{n}4{4n+s\choose 4l+s}{4l+x+q+4n+2s-2\choose q+4n+s-1}z^{4l}=\sum_{l=0}^{4n+s}{4n+s\choose l}{x+q+4n+s-2\choose q+4n+s-1-l}\cdot\\
 \frac{(z-1)^{4n+s-l}+(z+1)^{4n+s-l}+(z+j)^{4n+s-l}+(z-j)^{4n+s-l}}{z^{s-l}}
\end{multline}
\end{corollary}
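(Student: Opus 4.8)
The plan is to obtain all three identities as direct specializations of \eqref{3e36}: take $k=1$ (so the admissible residue is forced to be $t=0$), $k=2$ (keeping the free residue $t\in\{0,1\}$), and $k=4$ (renaming the free residue $t$ to $s\in\{0,1,2,3\}$). In each case I first multiply \eqref{3e36} through by $k$ to clear the factor $\tfrac1k$, then substitute the explicit $k$-th roots of unity $\omega_k=e^{j2\pi/k}$ and simplify the inner sum $\sum_{l=1}^{k}\frac{(1+z\omega_k^{l})^{kn+t-i}}{(\omega_k^{l})^{t-i}}$ together with the prefactor $z^{i-t}=z^{-(t-i)}$. Since \eqref{3e36} is already established, the only content here is the algebraic bookkeeping of the roots of unity; no further combinatorial input is needed.

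For $k=1$ one has $\omega_1=1$, so the inner sum collapses to the single term $(1+z)^{n-i}$ and the prefactor is $z^{i}$; renaming $i$ to $l$ gives the first identity at once. For $k=2$ one has $\omega_2=-1$, so the inner sum equals $\frac{(1-z)^{2n+t-i}}{(-1)^{t-i}}+(1+z)^{2n+t-i}$. Here I would use the two elementary facts $(1-z)^{m}=(-1)^{m}(z-1)^{m}$ and $(-1)^{2n}=1$, which together give $(-1)^{2n+t-i}=(-1)^{t-i}$ and hence $\frac{(1-z)^{2n+t-i}}{(-1)^{t-i}}=(z-1)^{2n+t-i}$; multiplying both summands by $z^{i-t}=z^{-(t-i)}$ turns them into $\frac{(z-1)^{2n+t-i}+(z+1)^{2n+t-i}}{z^{t-i}}$, and after renaming $i$ to $l$ this is the second identity.

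The $k=4$ case is where the bookkeeping is most delicate, and I expect it to be the main (though still routine) obstacle. Here $\omega_4=j$ with $\omega_4^{1}=j,\ \omega_4^{2}=-1,\ \omega_4^{3}=-j,\ \omega_4^{4}=1$, so the inner sum has four terms. The $\omega_4^{2}=-1$ and $\omega_4^{4}=1$ terms are handled exactly as in the $k=2$ case (now using $(-1)^{4n}=1$) and produce $(z-1)^{4n+s-i}$ and $(z+1)^{4n+s-i}$. For the two genuinely complex terms I would use the factorizations $1+zj=j(z-j)$ and $1-zj=-j(z+j)$, so that with $m=4n+s-i$ one has $(1+zj)^{m}=j^{m}(z-j)^{m}$ and $(1-zj)^{m}=(-j)^{m}(z+j)^{m}$; dividing by $j^{s-i}$ and $(-j)^{s-i}$ respectively leaves the prefactors $j^{m-(s-i)}=j^{4n}$ and $(-j)^{m-(s-i)}=(-j)^{4n}$, both of which are $1$ since $j^{4}=1$. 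Collecting the four terms and the factor $z^{i-s}$ yields $\frac{(z-1)^{4n+s-i}+(z+1)^{4n+s-i}+(z+j)^{4n+s-i}+(z-j)^{4n+s-i}}{z^{s-i}}$, which after renaming $i$ to $l$ is the third identity. Finally I would note that the hypotheses $0\le t<2$ and $0\le s<4$ in the statement are precisely the admissibility constraints $0\le t<k$ of \eqref{3e36} for $k=2$ and $k=4$.
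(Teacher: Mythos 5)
Your proposal is correct and follows exactly the paper's route: the authors likewise obtain all three identities by setting $k=1,2,4$ in \eqref{3e36}, and your roots-of-unity bookkeeping (in particular the factorizations $1+zj=j(z-j)$, $1-zj=-j(z+j)$ and the cancellations via $j^{4n}=(-j)^{4n}=(-1)^{2n}=1$) correctly fills in the simplification the paper leaves implicit.
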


\begin{proof}
Setting $k=1,2,4$ in \eqref{3e36}, we obtain $(28)-(30)$, respectively.
\end{proof}

\begin{corollary}For $0\leq n, 1\leq k, 0\leq t<k, 0\leq s_1<2, 0\leq s_2<4, \omega_k=e^{\frac{j2\pi}{k}}, j^2=-1$, there holds
\begin{align}
\sum_{i=0}^{kn+t}{kn+t\choose i}^2\sum_{l=1}^k\frac{(1+\omega_k^l)^{kn+t-i}}{(\omega_k^l)^{t-i}}&=\sum_{l=0}^n k{kn+t\choose kl+t}{kn+kl+2t\choose kn+t}\\
\sum_{i=0}^{kn+t}N_{kn+t,i+1}\sum_{l=1}^k\frac{(1+\omega_k^l)^{kn+t-i}}{(\omega_k^l)^{t-i}}&=\sum_{l=0}^n \frac{1}{n}{kn+t\choose kl+t}{kn+kl+2t\choose kl+t+1}\\
\sum_{l=0}^n2{2n+s_1\choose 2l+s_1}{2l+2n+2s_1\choose 2n+s_1}-1&=\sum_{l=0}^{2n+s_1}{2n+s_1\choose l}^22^{2n+s_1-l}
\end{align}\vspace{-0.5cm}
\begin{align}
\sum_{l=0}^{2n+s_1}N_{2n+s_1,l+1}2^{2n+s_1-l}&=\sum_{l=0}^n \frac{1}{n}{2n+s_1\choose 2l+s_1}{2n+2l+2s_1\choose 2l+s_1+1}\\
\sum_{l=0}^{4n+s_2}N_{4n+s_2,l+1} \left[2^{4n+s_2-l}+\frac{(1+j^{l-s_2})}{(1+j)^{l-s_2-4n}} \right]&=\sum_{l=0}^n \frac{1}{n}{4n+s_2\choose 4l+s_2}{4n+4l+2s_2\choose 4l+s_2+1}\\
\sum_{l=0}^{4n+s_2}{4n+s_2\choose l}^2 \left[\frac{2^{4n}}{2^{l-s_2}}+\frac{(1+j^{l-s_2})}{(1+j)^{l-s_2-4n}} \right] &=\sum_{l=0}^n4{4n+s_2\choose 4l+s_2}{4l+4n+2s_2\choose 4n+s_2}-1
\end{align}\vspace{-0.5cm}

\end{corollary}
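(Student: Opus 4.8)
The plan is to deduce the six identities $(31)$--$(36)$ from \eqref{3e36} by specializing its free parameters $x,q,z$, choosing them so that the two Stirling--type binomials on one side collapse either into a square or into a product $\binom{m}{i}\binom{m}{i+1}$; the $k=2$ and $k=4$ cases are just the further specializations $k=2,\,t=s_1$ and $k=4,\,t=s_2$ (equivalently, they follow from $(29)$ and $(30)$).

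I would set $z=1$ throughout and consider two substitutions. Taking $x=q=1$ turns $\binom{x+q+kn+t-2}{q+kn+t-1-i}$ into $\binom{kn+t}{kn+t-i}=\binom{kn+t}{i}$, so the right-hand side of \eqref{3e36} becomes $\tfrac1k\sum_i\binom{kn+t}{i}^{2}\sum_{l=1}^{k}\tfrac{(1+\omega_k^l)^{kn+t-i}}{(\omega_k^l)^{t-i}}$ while the left-hand side becomes $\sum_l\binom{kn+t}{kl+t}\binom{kn+kl+2t}{kn+t}$; clearing the factor $\tfrac1k$ gives $(31)$, and the $k=2,4$ versions give $(33)$ and $(36)$. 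Taking instead $x=2,\ q=0$ turns $\binom{x+q+kn+t-2}{q+kn+t-1-i}$ into $\binom{kn+t}{i+1}$ and $\binom{kl+x+q+kn+2t-2}{q+kn+t-1}$ into $\binom{kn+kl+2t}{kn+t-1}=\binom{kn+kl+2t}{kl+t+1}$; using the elementary identity $\binom{m}{i}\binom{m}{i+1}=m\,N_{m,i+1}$ with $m=kn+t$ rewrites the product as a Narayana term, and after cancelling the remaining normalizing constant one obtains $(32)$, with $(34)$ and $(35)$ as its $k=2,4$ specializations.

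The step that needs genuine care is the explicit value at $z=1$ of $S_i:=\sum_{l=1}^{k}(1+z\,\omega_k^l)^{kn+t-i}(\omega_k^l)^{i-t}$ for $k=2,4$; put $N=kn+t$. For $k=2$ the term $l=k$ gives $2^{N-i}$ and the term $l=1$ (where $\omega_2=-1$) gives $(1-1)^{N-i}=0^{N-i}$, which vanishes unless $i=N$ and then equals $1$; this lone term contributes a $\tfrac1k$ to the right side of \eqref{3e36}, i.e.\ (after clearing denominators) the additive constant $-1$ in $(33)$, whereas in the Narayana version $(34)$ it is killed by $\binom{N}{i+1}=0$, which is exactly why $(34)$ has no constant. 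For $k=4$ one has $\omega_4=j$; besides the $l=2$ term $0^{N-i}$ and the $l=4$ term $2^{N-i}=2^{4n}/2^{i-t}$, I would pair the primitive roots $l=1,3$ and use $1-j=-j(1+j)$ and $j^{4n}=(-1)^{4n}=1$ to check that $\dfrac{(1-j)^{N-i}}{(-j)^{t-i}}=(1+j)^{N-i}$ and $\dfrac{(1+j)^{N-i}}{j^{t-i}}=(1+j)^{N-i}j^{i-t}$, so that the pair collapses to $(1+j)^{N-i}\bigl(1+j^{i-t}\bigr)=\dfrac{1+j^{i-t}}{(1+j)^{i-t-4n}}$. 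Assembling these pieces reproduces the bracketed factor $\bigl[2^{4n}/2^{i-t}+(1+j^{i-t})/(1+j)^{i-t-4n}\bigr]$ of $(35)$ and $(36)$; the leftover $0^{N-i}$ at $i=N$ again yields the $-1$ of $(36)$ but is annihilated by $\binom{N}{i+1}=0$ in $(35)$. Matching the (already elementary) left-hand side of \eqref{3e36} at $z=1$ then completes all six identities.

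I expect the accounting of the stray $i=kn+t$ term to be the only delicate point: it is at once the source of the constants $-1$ in $(33)$ and $(36)$ and, via the vanishing of $\binom{kn+t}{i+1}$, the reason those constants are absent from $(32)$, $(34)$ and $(35)$; everything else reduces to a direct substitution into \eqref{3e36} together with $\binom{m}{i}\binom{m}{i+1}=m\,N_{m,i+1}$.
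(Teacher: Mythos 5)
Your proposal follows exactly the paper's proof: the paper likewise obtains (31) and (32) by setting $x=q=1,z=1$ and $q=0,x=2,z=1$ in \eqref{3e36} and then derives (33)--(36) by specializing $k=2,4$, so your explicit evaluation of the roots-of-unity sums and of the stray $i=kn+t$ term (the source of the $-1$ constants) merely fills in details the paper leaves implicit. The one point worth flagging is the normalizing constant in (32): the substitution actually yields $k/(kn+t)$ rather than the $1/n$ printed in the statement, and these agree only when $t=0$ --- but that is an issue with the stated corollary itself, not with your argument.
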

\begin{proof}
Setting $x=q=1,z=1$ in \eqref{3e36} leads to eq.~$(31)$;
Setting $q=0, x=2, z=1$ in \eqref{3e36} leads to eq.~$(32)$.
Setting $k=2,4$ in eq.~$(31)$ leads to eq.~$(33)$ and respectively eq.~$(36)$;
Setting $k=2,4$ in eq.~$(32)$ leads to eq.~$(34)$ and respectively eq.~$(35)$,
completing the proof.
\end{proof}
\begin{corollary}For $n\geq 0$, we have
\begin{align}
\sum_{l=0}^n{2n\choose 2l}{2l+2n\choose 2n}&=\sum_{l=0}^{n-1}{2n\choose 2l+1}{2n+2l+1\choose 2n}+1\\
\sum_{l=0}^n{2n+1\choose 2l}{2l+2n+1\choose 2n+1}&=\sum_{l=0}^{n}{2n+1\choose 2l+1}{2n+2l+2\choose 2n+1}-1
\end{align}
\end{corollary}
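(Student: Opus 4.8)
The plan is to reduce both identities of the corollary to the single alternating evaluation
\[
\sum_{k=0}^{m}(-1)^{k}\binom{m}{k}\binom{m+k}{m}=(-1)^{m}\qquad(m\ge 0),
\]
and then to split this sum by the parity of $k$, taking $m=2n$ for the first identity and $m=2n+1$ for the second.

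First I would obtain this evaluation by specializing the theorem containing \eqref{3e36} at $k=1$ (the case that produced eq.~(28) above): with $k=1$ the factor $\frac1k$ and the sum $\sum_{l=1}^{k}$ trivialize, so the right-hand side collapses to $\sum_{i}\binom{n}{i}\binom{x+q+n-2}{q+n-1-i}z^{i}(1+z)^{n-i}$, and then putting $x=q=1$ and $z=-1$ kills every term except the one with $i=n$, which equals $(-1)^{n}$, while the left-hand side becomes $\sum_{l=0}^{n}(-1)^{l}\binom{n}{l}\binom{n+l}{n}$; renaming $n$ as $m$ gives the evaluation. (Alternatively, \eqref{3e1} with $n=r=m$ and $q=0$ gives $\sum_{k=0}^{m}(-1)^{m-k}\binom{m}{k}\binom{k+m}{m}=\binom{m}{0}=1$, after which $(-1)^{m-k}=(-1)^{m}(-1)^{k}$ does the job; or \eqref{3e4} with $n_{1}=n$, $x=q=1$, $z=-1$ yields the same statement. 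Every such specialization is legitimate because $m$ is a non-negative integer.)

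Next I would separate the even- and odd-indexed terms, writing $k=2l$ and $k=2l+1$. For $m=2n$ the even terms run over $l=0,\dots,n$, whereas the odd terms run only over $l=0,\dots,n-1$ since $2l+1\le 2n$ forces $l\le n-1$; transposing the odd part to the right-hand side and using $(-1)^{2n}=1$ reproduces the first identity, the leftover constant being the promised $+1$. For $m=2n+1$ both parities run over $l=0,\dots,n$, and the same transposition gives the second identity, the leftover constant now being $(-1)^{2n+1}=-1$. Along the way $\binom{m+k}{m}$ specializes to $\binom{2n+2l}{2n}$, $\binom{2n+2l+1}{2n}$, $\binom{2n+1+2l}{2n+1}$ and $\binom{2n+2l+2}{2n+1}$ respectively, matching the corollary verbatim.

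This last step is essentially bookkeeping, so I do not expect a genuine obstacle. The one point that needs care is the asymmetry in the $m=2n$ case — the odd-index summation terminates at $n-1$ rather than at $n$ — which is exactly the structural reason the first identity carries an extra ``$+1$'' rather than being a perfect balance. Beyond that, one merely tracks the global sign $(-1)^{m}$ through the parity split and picks whichever earlier identity (the $k=1$ instance of \eqref{3e36}, or \eqref{3e1}, or \eqref{3e4}) is most convenient as the source of the evaluation.
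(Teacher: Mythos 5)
Your proof is correct, and the parity bookkeeping (odd indices stopping at $n-1$ when $m=2n$, both parities reaching $n$ when $m=2n+1$) is exactly right, as is each of the three proposed derivations of the evaluation $\sum_{k=0}^{m}(-1)^{k}\binom{m}{k}\binom{m+k}{m}=(-1)^{m}$. However, your route differs from the paper's. The paper proves the corollary by combining its eq.~(7) (the Stanley identity, which at $z=2$ gives the \emph{unsigned} full sum $\sum_{k}\binom{m}{k}\binom{m+k}{m}=\sum_{k}\binom{m}{k}^{2}2^{m-k}$) with eq.~(33) (the $k=2$ roots-of-unity-filter identity, which says twice the even-indexed partial sum minus $1$ equals that same quantity); subtracting yields the stated difference of the even and odd parts. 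You instead evaluate the \emph{alternating} sum directly at $z=-1$ and split it by parity, never touching eq.~(33) or the Schr\"oder-type evaluation at $z=2$. The two arguments are equivalent in content --- eq.~(33) with $k=2$ secretly encodes the average of the $z=1$ and $z=-1$ evaluations, so ``(33) minus (7)'' is precisely your alternating sum --- but yours is the more direct and elementary packaging: one clean specialization ($z=-1$ in eq.~(28), or eq.~\eqref{3e1} with $r=n$, $q=0$) plus a parity split, whereas the paper's reuses two of its previously displayed identities at the cost of routing through the roots-of-unity machinery.
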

\begin{proof}
Using both eq.~$(7)$ and eq.~$(33)$ completes the proof.
\end{proof}
\subsection*{Acknowledgements}

We acknowledge the financial support of the Future and Emerging
Technologies (FET) programme within the Seventh Framework Programme (FP7) for
Research of the European Commission, under the FET-Proactive grant agreement
TOPDRIM, number FP7-ICT-318121.



\begin{thebibliography}{99}
\bibitem{1}W.Y.C. Chen, A general bijective algorithm for trees, Proc.
Nat. Acad. Sci. U.S.A., 87 (1990), 9635--9639.
\bibitem{10}W.Y.C. Chen, S.X.M. Pang, On the combinatorics of the Pfaff identity, Discrete Mathematics 309(8) (2009), 2190--2196.
\bibitem{11}R.X.F. Chen, A refinement of the formula for $k$-ary trees and the Gould-Vandermonde's convolution, Electr. J. Comb. 15(1) (2008), R52.
\bibitem{17}G. Chapuy, V. F\'{e}ray, \'{E}. Fusy, A simple model of trees for unicellular maps, J. Comb. Theory, Ser. A 120(8) (2013), 2064--2092.
\bibitem{18}D. Foata, D. Zeilberger, A classic proof of a recurrence for a very classical sequence, J. Comb. Theory, Ser. A 80(2) (1997), 380--384.
\bibitem{3} H.W. Gould, Final analysis of Vandermonde's convolution,
Amer. Math. Monthly, Vol. 64 (1957), 409--415.
\bibitem{4} H.W. Gould, Some gereralization of Vandermonde's
convolution, The American Mathematical Monthly, 63(2)
(1956), 84--91.
\bibitem{15} J. Harer, D. Zagier, The Euler characteristic of the moduli space of curves, Invent. Math., 85(3) (1986), 457--485.
\bibitem{5} J.H. van Lint, R.M. Wilson, A course in combinatorics, Second
Edition, Cambridge University Press, 2001.
\bibitem{20}T.J.X. Li, C.M. Reidys, A combinatorial interpretation of the $\kappa^*_g(n)$ coefficients, arXiv:1406.3162.


\bibitem{2}T. Mansour, Y. Sun, Identities involving Narayana polynomials and Catalan numbers, Discrete Math. 309 (2009), 4079--4088.



\bibitem{6} J.W. Moon, Counting labelled trees, Canadian Mathematical Congress,
Canadian Mathematical Monographs, No. 1, 1970.
\bibitem{7} A. Postnikov, Intransitive trees, J. Combin. Theory Ser. A 79 (1997), 360--366.
\bibitem{8} H. Pr\"{u}fer, Never beweis eines satzesuber permutationen, Arch. Math. Phys. Sci.,
27 (1918), 742--744.
\bibitem{9} R. Sprugnoli, Riordan arrays and the Abel--Gould
identity, Discrete Math. 142 (1995), 213--233.


\bibitem{12} R.P. Stanley, bijective proof problems, version of 18 August 2009,
\href{http://www-math.mit.edu/~rstan/}
{http://www-math.mit.edu/$\sim$rstan/.}
\bibitem{19} L.W. Shapiro, R.A. Sulanke, Bijections for the Schr\"{o}der numbers, Mathematics Magazine 73(5) (2000), 369--376.
\bibitem{14} R. Sprugnoli, Riordan array proofs of identities in Gould's book, 2006.

\bibitem{16} T.R.S. Walsh, A.B. Lehman, Counting rooted maps by genus I, J. Combinatorial Theory Ser. B13 (1972), 192--218.
\bibitem{13} H.S. Wilf, Generatingfunctionology, 2nd ed., Academic Press, London, 1994.



\end{thebibliography}
\end{document}